\theoremstyle{definition}
\newtheorem{dfn}{Definition}[section]
\theoremstyle{plain}
\newtheorem{lem}[dfn]{Lemma}
\newtheorem{prop}[dfn]{Proposition}
\newtheorem{thm}[dfn]{Theorem}
\newtheorem{cor}[dfn]{Corollary}
\newtheorem*{thm*}{Theorem}
\theoremstyle{remark}
\newtheorem{rem}[dfn]{Remark}
\numberwithin{equation}{section}
\title{$\tau$-Tilting finiteness of group algebras over generalized symmetric groups}
\author{Naoya Hiramae}
\date{}
\begin{document}

\maketitle

\renewcommand{\thefootnote}{\fnsymbol{footnote}}
\footnote[0]{\emph{Mathematics Subject Classification} (2020). 16G10, 20C20.}
\footnote[0]{\emph{Keywords.} $\tau$-Tilting finite algebras, group algebras, $p$-hyperfocal subgroups.}
\renewcommand{\thefootnote}{\arabic{footnote}}

\begin{abstract}
    In this paper, we show that weakly symmetric $\tau$-tilting finite algebras have positive definite Cartan matrices, which implies that we can prove $\tau$-tilting infiniteness of weakly symmetric algebras by calculating their Cartan matrices. Similarly, we obtain the condition on Cartan matrices that selfinjective algebras are $\tau$-tilting infinite. By applying this result, we show that a group algebra of $(\mathbb{Z}/p^l\mathbb{Z})^n\rtimes H$ is $\tau$-tilting infinite when $p^l\geq n$ and $\#\mathrm{IBr}\,H\geq\min\{p,3\}$, where $p>0$ is the characteristic of the ground field, $H$ is a subgroup of the symmetric group $\mathfrak{S}_n$ of degree $n$, the action of $H$ permutes the entries of $(\mathbb{Z}/p^l\mathbb{Z})^n$, and $\mathrm{IBr}\,H$ denotes the set of irreducible $p$-Brauer characters of $H$. Moreover, we show that under the assumption that $p^l\geq n$ and $H$ is a $p'$-subgroup of $\mathfrak{S}_n$, $\tau$-tilting finiteness of a group algebra of a group $(\mathbb{Z}/p^l\mathbb{Z})^n\rtimes H$ is determined by its $p$-hyperfocal subgroup.
\end{abstract}

\section{Introduction}

Throughout this paper, $k$ always denotes an algebraically closed field with prime characteristic $p$ and algebras means finite dimensional $k$-algebras. Modules are always left and finitely generated.\\
\indent Demonet, Iyama and Jasso \cite{DIJ} introduced a new class of algebras, \textit{$\tau$-tilting finite algebras}, and showed that $\tau$-tilting finiteness of algebras relates to brick finiteness and functorially finiteness of all the torsion classes. Miyamoto and Wang \cite{MW} posed the question whether derived equivalences preserve $\tau$-tilting finiteness over symmetric algebras, and it was shown to be true over symmetric algebras of polynomial growth \cite{MW} and over Brauer graph algebras \cite{AAC}. This question is important because $\tau$-tilting finiteness of symmetric algebras implies tilting-discreteness if $\tau$-tilting finiteness is invariant under derived equivalences over symmetric algebras (see \cite[Corollary 2.11]{AM}). Therefore, it is meaningful to consider $\tau$-tilting finiteness of algebras and many researchers have studied the $\tau$-tilting finite algebras (for example, \cite{Ad, AAC, AH, AHMW, AW, ALS, KK1, KK2, K, MS, MW, Mi, Mo, P, STV, W1, W2, W3, W4, Z}).\\
\indent Our ultimate goal is to clarify what kinds of subgroups control $\tau$-tilting finiteness of blocks of group algebras of finite groups, just as representation types of blocks could be classified in terms of defect groups (see Theorem \ref{repblock}). In \cite{HK}, we conjectured that $\tau$-tilting finiteness of a group algebra of a finite group $G$ is determined by a so-called \textit{$p$-hyperfocal subgroup} of $G$, and showed that this conjecture holds in the case $G=P\rtimes H$, where $P$ is an abelian $p$-group and $H$ is an abelian $p'$-group acting on $P$. In this paper, to provide another example in which the conjecture holds, we investigate $\tau$-tilting finiteness of a group algebra $k[(\mathbb{Z}/m\mathbb{Z})^n\rtimes H]$, where $H$ is a subgroup of the symmetric group $\mathfrak{S}_n$ of degree $n$ and the action of $H$ on $(\mathbb{Z}/m\mathbb{Z})^n$ is the permutation of entries. Let $m:=m'p^l$ with a positive integer $m'$ coprime to $p$ and a nonnegative integer $l$.\\
\indent We first discuss the relationship between Cartan matrices and $\tau$-tilting finiteness of selfinjective algebras, and show the following propositions:\\

\begin{prop}[See Proposition \ref{tinf}]\label{itinf}
    Assume that $\Lambda$ is a weakly symmetric algebra.
    \begin{enumerate}
        \setlength{\parskip}{0cm}
        \setlength{\itemsep}{0cm}
        \item If $\Lambda$ is $\tau$-tilting finite, then the Cartan matrix of $\Lambda$ is positive definite.
        \item If $\Lambda$ is $g$-tame, then the Cartan matrix of $\Lambda$ is positive semidefinite.\\
    \end{enumerate}
\end{prop}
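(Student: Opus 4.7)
The plan is to analyse the quadratic form $v\mapsto v^{T}Cv$ on $K_0(\mathrm{proj}\,\Lambda)_{\mathbb{R}}\cong\mathbb{R}^{n}$ over the $g$-fan $\Sigma(\Lambda)$ and show that it is strictly positive on each closed maximal cone. The two main ingredients are the geometry of $\Sigma(\Lambda)$ and the Euler-form identity on $K^{b}(\mathrm{proj}\,\Lambda)$.

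First I would recall that each basic two-term silting complex $T=T_{1}\oplus\cdots\oplus T_{n}$ determines a full-dimensional simplicial cone $\sigma_{T}=\sum_{a}\mathbb{R}_{\ge 0}\,g(T_{a})$ in $\mathbb{R}^{n}$, and that these cones assemble into the $g$-fan $\Sigma(\Lambda)$. Standard results (Demonet--Iyama--Jasso, Asai, \ldots) give that $\Lambda$ is $\tau$-tilting finite iff $|\Sigma(\Lambda)|=\mathbb{R}^{n}$, and $g$-tame iff $|\Sigma(\Lambda)|$ is dense in $\mathbb{R}^{n}$. The weakly symmetric hypothesis guarantees that the Nakayama permutation is trivial, from which selfinjective duality $\dim\mathrm{Hom}(P_i,P_j)=\dim\mathrm{Hom}(P_j,\nu P_i)$ yields $c_{ij}=c_{ji}$, so that $C$ is symmetric and positive (semi)definiteness has its usual meaning.

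The key computation is the Euler-form identity
\[
g(X)^{T}Cg(Y)\;=\;\sum_{i\in\mathbb{Z}}(-1)^{i}\dim\mathrm{Hom}_{K^{b}(\mathrm{proj}\,\Lambda)}(X,Y[i]),
\]
which is immediate on indecomposable projectives (where it recovers $c_{ab}=\dim\mathrm{Hom}(P_{a},P_{b})$) and extends by bilinearity. For a silting complex $T$, the vanishing $\mathrm{Hom}(T,T[i])=0$ for $i\ne 0$ propagates to summands, collapsing the identity to $g(T_{a})^{T}Cg(T_{b})=\dim\mathrm{Hom}_{\Lambda}(T_{a},T_{b})$. The resulting matrix $M:=(\dim\mathrm{Hom}(T_{a},T_{b}))_{a,b}$ has non-negative entries with strictly positive diagonal $\dim\mathrm{End}(T_{a})\ge 1$. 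For any $v\in\overline{\sigma_{T}}$, written uniquely as $v=\sum_{a}\lambda_{a}g(T_{a})$ with $\lambda_{a}\ge 0$ (the $g(T_{a})$ being an $\mathbb{R}$-basis of $\mathbb{R}^{n}$), we get
\[
v^{T}Cv=\lambda^{T}M\lambda\;\ge\;\sum_{a}\lambda_{a}^{2}\dim\mathrm{End}(T_{a}),
\]
which is strictly positive whenever $v\ne 0$.

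Both parts then drop out. For (a), $\tau$-tilting finiteness gives $\mathbb{R}^{n}=\bigcup_{T}\overline{\sigma_{T}}$, hence $v^{T}Cv>0$ for every nonzero $v$, so $C$ is positive definite. For (b), density of $|\Sigma(\Lambda)|$ together with continuity of $v\mapsto v^{T}Cv$ extends the inequality to all of $\mathbb{R}^{n}$, yielding positive semidefiniteness. The principal obstacle I anticipate is the linear-algebraic step: establishing $g(T_{a})^{T}Cg(T_{b})=\dim\mathrm{Hom}(T_{a},T_{b})$ with the correct sign conventions on two-term complexes, after which the conclusion reduces to short convex-geometric bookkeeping; the completeness and density statements for $\Sigma(\Lambda)$ I would take as black boxes from the existing $\tau$-tilting literature.
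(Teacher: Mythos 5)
Your overall route---the Euler-form identity on $K^b(\Lambda\textrm{-}\mathrm{proj})$ combined with completeness (resp.\ density) of the union of $g$-cones---is the same as the paper's, but there is a genuine gap at the decisive step. You claim that a silting complex $T$ satisfies $\mathrm{Hom}_{K^b(\Lambda\textrm{-}\mathrm{proj})}(T,T[i])=0$ for all $i\neq 0$. That is the \emph{tilting} condition; silting only guarantees vanishing for $i>0$. For two-term complexes $X,Y$ the Euler form reads
\begin{equation*}
[X]^\top C_\Lambda [Y]=\dim_k\mathrm{Hom}(X,Y)-\dim_k\mathrm{Hom}(X,Y[1])-\dim_k\mathrm{Hom}(X,Y[-1]),
\end{equation*}
and while presilting kills the $[1]$-term, the $[-1]$-term survives in general: already for $\Lambda=k(1\to 2)$ the two-term silting complex $T=P_2\oplus P_1[1]$ has $\mathrm{Hom}(T,T[-1])\supseteq\mathrm{Hom}(P_2,P_1)\neq 0$, so your matrix $M=(\,[T_a]^\top C_\Lambda[T_b]\,)$ acquires negative off-diagonal entries and the bound $\lambda^\top M\lambda\geq\sum_a\lambda_a^2\dim_k\mathrm{End}(T_a)$ fails. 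This is precisely where the weakly symmetric hypothesis must enter---not, as you suggest, merely to make $C_\Lambda$ symmetric. The paper invokes \cite[Theorem A.4]{Ai}: over a weakly symmetric algebra every silting complex is tilting (essentially because Serre duality gives $\mathrm{Hom}(T,T[-1])\cong D\mathrm{Hom}(T,\nu T[1])$ and $\nu T\cong T$ by uniqueness of two-term silting complexes with a given $g$-vector). Only then does the Euler form collapse to $\dim_k\mathrm{End}$ of a nonzero complex and yield strict positivity. That this input is not cosmetic is shown by Proposition \ref{tinfsi}: for selfinjective algebras that are not weakly symmetric the conclusion genuinely fails without the extra $\nu$-invariance hypothesis on $v$.

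Once you insert that citation (or reprove silting $=$ tilting as above), the remainder of your argument is sound, and your handling of part (a) is in fact marginally cleaner than the paper's: you work directly with real vectors in the closed cones, whereas the paper first reduces to integer vectors and realizes $v$ as the class of an actual complex $T'=T_1^{\oplus a_1}\oplus\cdots\oplus T_t^{\oplus a_t}$; your bilinear reformulation via the matrix $M$ avoids that reduction. Part (b) by density and continuity is identical to the paper's.
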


\begin{prop}[See Proposition \ref{tinfsi}]\label{itinfsi}
    Assume that $\Lambda$ is a selfinjective algebra with $t$ simple modules. Let $C_\Lambda$ be the Cartan matrix of $\Lambda$ and $\nu\in\mathfrak{S}_t$ the Nakayama permutation of $\Lambda$. If there exists a nonzero vector $v\in\mathbb{Z}^t$ such that $v^\top C_\Lambda v\leq 0$ and $v$ is invariant under the action of $\nu$, the permutation of entries, then $\Lambda$ is $\tau$-tilting infinite.\\
\end{prop}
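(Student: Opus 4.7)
The plan is to reduce to Proposition~\ref{itinf}(a) by constructing an auxiliary weakly symmetric algebra to which the hypothesis on $v$ transfers. The natural candidate is a skew group algebra $\Gamma := \Lambda \ast \langle\sigma\rangle$, where, after assuming $\Lambda$ basic (a harmless reduction by Morita invariance of the Cartan matrix, the Nakayama permutation, and $\tau$-tilting finiteness), $\sigma \in \operatorname{Aut}(\Lambda)$ is a $k$-algebra automorphism lifting $\nu$ in the sense that $\sigma(e_i) = e_{\nu(i)}$. In the case $d := \operatorname{ord}(\sigma)$ is invertible in $k$, standard skew-group-algebra results show that $\Gamma$ is selfinjective and that its Nakayama twist becomes inner, so $\Gamma$ is in fact weakly symmetric.

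With $\Gamma$ constructed, the next step is to translate the Cartan-negativity hypothesis on $v$ into the analogous statement for $\Gamma$. Using Clifford theory for the separable extension $\Lambda \subseteq \Gamma$, the simple $\Gamma$-modules are parameterized by pairs $(\mathcal{O},\chi)$ with $\mathcal{O}$ a $\nu$-orbit on the simple $\Lambda$-modules and $\chi$ an irreducible character of the stabilizer of a point of $\mathcal{O}$. Expressing indecomposable projective $\Gamma$-modules as summands of inductions of indecomposable projective $\Lambda$-modules and computing dimensions, the block of $C_\Gamma$ indexed by trivial characters is obtained from $C_\Lambda$ by summing entries over $\nu$-orbits. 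In particular, for a $\nu$-invariant $v \in \mathbb{Z}^t$, the associated vector $v' \in \mathbb{Z}^{t'}$ supported on the trivial-character block should satisfy an identity of the form $v'^\top C_\Gamma v' = c \cdot v^\top C_\Lambda v$ with $c > 0$, hence $v'^\top C_\Gamma v' \le 0$. Applying Proposition~\ref{itinf}(a) to $\Gamma$ gives that $\Gamma$ is $\tau$-tilting infinite, and since $\Lambda \subseteq \Gamma$ is a separable extension, brick-finiteness descends and $\tau$-tilting infiniteness transfers back to $\Lambda$ via the Demonet--Iyama--Jasso characterization.

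The main obstacle is the case $p \mid \operatorname{ord}(\nu)$, for which the above skew group construction is no longer separable and the transfer argument breaks down. I would attempt to handle this by first passing through $\langle \sigma_{p'} \rangle$, the $p'$-part of $\langle\sigma\rangle$, to obtain an intermediate selfinjective algebra whose remaining Nakayama permutation has order a power of $p$, and then treating this residual case by a direct $g$-vector argument that parallels the proof of Proposition~\ref{itinf}(a) but is restricted to the $\nu$-invariant subspace of $\mathbb{Z}^t$ (where the Nakayama twist on the pairing of $g$-vectors vanishes). The most delicate piece of bookkeeping throughout is establishing the precise Cartan-identity $v'^\top C_\Gamma v' = c \cdot v^\top C_\Lambda v$: this requires careful tracking of how induction $-\otimes_\Lambda \Gamma$ acts on indecomposable projectives together with the Clifford decomposition of the resulting modules, and is where I expect the main work of the proof to lie.
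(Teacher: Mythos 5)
Your strategy is genuinely different from the paper's, and as it stands it has gaps I do not see how to close. The most serious one is the very first step: to make $\Gamma=\Lambda\ast\langle\sigma\rangle$ weakly symmetric you need $\sigma$ to be (up to inner automorphisms) the Nakayama automorphism of $\Lambda$, and while the Nakayama \emph{permutation} always has finite order, the Nakayama \emph{automorphism} need not admit a finite-order representative, let alone one of order prime to $p$; an arbitrary automorphism merely satisfying $\sigma(e_i)=e_{\nu(i)}$ gives you no control over the Frobenius form of $\Gamma$, so the claim that the Nakayama twist of $\Gamma$ becomes inner is unjustified. Even granting the construction, the descent of $\tau$-tilting infiniteness from $\Gamma$ to $\Lambda$ along the separable extension would itself need a proof or reference, and the case $p\mid\mathrm{ord}(\sigma)$ is left open. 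Crucially, the fallback you propose for that case --- ``a direct $g$-vector argument restricted to the $\nu$-invariant subspace'' --- is not a residual sub-case: once carried out it proves the whole proposition and renders the skew-group detour, the Clifford-theoretic Cartan identity and the transfer argument unnecessary.

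The missing idea is how $\nu$-invariance of $v$ is used directly, and it is exactly the paper's proof of Proposition \ref{tinfsi}. Suppose for contradiction that $\Lambda$ is $\tau$-tilting finite. By Theorem \ref{asai}, $v$ lies in the $g$-cone of some basic two-term silting complex $T=T_1\oplus\cdots\oplus T_t$, so $v=[T']$ for the nonzero two-term presilting complex $T':=T_1^{\oplus a_1}\oplus\cdots\oplus T_t^{\oplus a_t}$ with $a_i\in\mathbb{Z}_{\geq0}$, as in the proof of Proposition \ref{tinf}(a). The Nakayama functor sends $T'$ to another two-term presilting complex whose $g$-vector is obtained from $v$ by permuting entries by $\nu$, hence equals $v$; since two-term presilting complexes are determined by their $g$-vectors (\cite[Theorem 5.5]{AIR}), $T'$ is stable under the Nakayama functor and therefore pretilting by \cite[Theorem A.4]{Ai}. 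Consequently $\mathrm{Hom}_{K^b(\Lambda\textrm{-proj})}(T',T'[i])=0$ for all $i\neq0$ and
$v^\top C_\Lambda v=\sum_{i\in\mathbb{Z}}(-1)^i\dim_k\mathrm{Hom}_{K^b(\Lambda\textrm{-proj})}(T',T'[i])=\dim_k\mathrm{End}_{K^b(\Lambda\textrm{-proj})}(T')>0$,
contradicting $v^\top C_\Lambda v\leq0$. No auxiliary algebra is needed.
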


\noindent Thanks to the above propositions, we can show $\tau$-tilting infiniteness of algebras by calculating Cartan matrices in some cases. Next, we construct the selfinjective quotient $\mathcal{C}\rtimes H$ of $k[(\mathbb{Z}/m\mathbb{Z})^n\rtimes H]$ and calculate the Cartan matrix and the Nakayama permutation of $\mathcal{C}\rtimes H$. Moreover, we show that $\mathcal{C}\rtimes H$ satisfies the assumption of Proposition \ref{itinfsi} when $p^l\geq n$ and $\#\textrm{IBr}\,H\geq\min\{p,3\}$, where $\textrm{IBr}\,H$ is the set of irreducible $p$-Brauer characters of $H$ (note that there exists a bijection between $\mathrm{IBr}\,H$ and the set of isoclasses of simple $kH$-modules). Hence, we obtain the following theorem:\\

\begin{thm}[See Corollary \ref{main1}]\label{intro}
    If $p^l\geq n$ and $\#\mathrm{IBr}\,H\geq\min\{p,3\}$, then $k[(\mathbb{Z}/m\mathbb{Z})^n\rtimes H]$ is $\tau$-tilting infinite.\\
\end{thm}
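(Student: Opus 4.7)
The plan is to apply Proposition~\ref{itinfsi} to a carefully chosen selfinjective quotient of $k[(\mathbb{Z}/m\mathbb{Z})^n\rtimes H]$. Since $\tau$-tilting infiniteness is preserved under surjective algebra homomorphisms (any $\tau$-tilting finite algebra has only $\tau$-tilting finite quotients), it suffices to produce a selfinjective quotient $\mathcal{C}\rtimes H$ for which we can exhibit a nonzero $\nu$-invariant vector $v\in\mathbb{Z}^t$ with $v^\top C_{\mathcal{C}\rtimes H}v\leq 0$. The argument therefore proceeds in three stages: construct $\mathcal{C}$, compute the Cartan matrix and Nakayama permutation of $\mathcal{C}\rtimes H$, and then exhibit the required vector.

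Write $m=m'p^l$ with $\gcd(m',p)=1$, so that $k[(\mathbb{Z}/m\mathbb{Z})^n]\cong k[(\mathbb{Z}/m'\mathbb{Z})^n]\otimes_k k[(\mathbb{Z}/p^l\mathbb{Z})^n]$. The $H$-invariant idempotent cutting out the trivial character of the first (semisimple) tensor factor provides an $H$-equivariant surjection onto $k[(\mathbb{Z}/p^l\mathbb{Z})^n]\cong k[y_1,\ldots,y_n]/(y_1^{p^l},\ldots,y_n^{p^l})$, on which $H$ acts by permuting the $y_i$. One then passes to a suitable further $H$-stable quotient $\mathcal{C}$ so that $\mathcal{C}\rtimes H$ remains selfinjective and the structure of $\mathcal{C}$ as a $kH$-module is tractable. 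Because $\mathcal{C}$ is local, Clifford theory puts the simples of $\mathcal{C}\rtimes H$ in bijection with $\mathrm{IBr}\,H$, so that $t=\#\mathrm{IBr}\,H$, and the Cartan entries $c_{ij}$ can be read off from the composition factors of $\mathcal{C}\otimes_k S_j$ as $kH$-modules, for each simple $kH$-module $S_j$. The Nakayama permutation is then traced from the $H$-action on the socle of $\mathcal{C}$.

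The main obstacle lies in the Cartan-matrix computation and in the construction of $v$. The hypothesis $p^l\geq n$ is used to ensure that the truncation $k[y_1,\ldots,y_n]/(y_i^{p^l})$ is large enough that the natural $kH$-module $\langle y_1,\ldots,y_n\rangle$ and its symmetric powers persist as genuine $kH$-summands of $\mathcal{C}$, so that the combinatorics of the $c_{ij}$ matches what one would predict from the untruncated picture. The hypothesis $\#\mathrm{IBr}\,H\geq\min\{p,3\}$ then supplies enough simple $kH$-modules to produce a nonzero $\nu$-invariant $v\in\mathbb{Z}^t$ on which the Cartan quadratic form is nonpositive; the dichotomy between $p=2$ and $p\geq 3$ reflects that two simples already suffice in characteristic $2$, whereas for odd $p$ a third simple is needed to keep $v$ nonzero under the permutation action of $\nu$. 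Once such a $v$ is exhibited, Proposition~\ref{itinfsi} immediately yields $\tau$-tilting infiniteness of $\mathcal{C}\rtimes H$, and hence of $k[(\mathbb{Z}/m\mathbb{Z})^n\rtimes H]$.
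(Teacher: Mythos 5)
Your overall strategy coincides with the paper's: pass to a selfinjective quotient $\mathcal{C}\rtimes H$ of $k[(\mathbb{Z}/p^l\mathbb{Z})^n\rtimes H]$, compute its Cartan matrix and Nakayama permutation, exhibit a nonzero $\nu$-invariant integer vector on which the Cartan form is nonpositive, and conclude by Proposition \ref{itinfsi} together with the fact that quotients of $\tau$-tilting finite algebras are $\tau$-tilting finite. The reduction from $m$ to $p^l$ via the semisimple tensor factor $k[(\mathbb{Z}/m'\mathbb{Z})^n]$ is also fine.

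The genuine gap is that you never construct $\mathcal{C}$, and the two facts on which the whole argument hinges are exactly the ones that depend on the specific choice. The paper takes $\mathcal{C}$ to be the coinvariant algebra $k[x_1,\ldots,x_n]/(E_1,\ldots,E_n)$, the quotient of $k[x_1,\ldots,x_n]/(x_1^{p^l},\ldots,x_n^{p^l})$ by the elementary symmetric polynomials (which are central and lie in the radical of the skew group algebra). The hypothesis $p^l\geq n$ enters because $x_j^n=-\sum_{i=1}^n(-1)^iE_ix_j^{n-i}$ already lies in $(E_1,\ldots,E_n)$, so the truncation relations $x_j^{p^l}=0$ become redundant and the quotient is precisely $\mathcal{C}$ --- not because ``symmetric powers persist as $kH$-summands.'' With this $\mathcal{C}$, selfinjectivity of $\mathcal{C}\rtimes H$ is proved by a Frobenius form supported on the one-dimensional top-degree piece $k\Delta$ with $\Delta=x_1^{n-1}x_2^{n-2}\cdots x_{n-1}$, which carries the sign representation; that is what identifies the Nakayama permutation as $\lambda\mapsto\lambda^*=\lambda\otimes\mathrm{sgn}$. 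More seriously, your existence claim for a $\nu$-invariant $v$ with $v^\top C v\leq0$ is asserted rather than derived: for a general selfinjective algebra the Cartan matrix can be positive definite (for group algebras it always is), so ``enough simples'' is not by itself a reason. What makes it work here is Mitchell's theorem that $[\mathcal{C}]=[k\mathfrak{S}_n]$ in $K_0(k\mathfrak{S}_n\textrm{-mod})$, hence $[\mathcal{C}]=(\mathfrak{S}_n:H)[kH]$ as a virtual $kH$-module; this forces each $\widetilde{P}_\mu$ to restrict to a free $kH$-module and the Cartan matrix to have rank one, with every row proportional to $(\dim_kP_\chi)_{\chi\in\mathrm{IBr}\,H}$. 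Only then does the explicit null vector $v=\dim_kP_\mu\cdot(v_\lambda+v_{\lambda^*})-\dim_kP_\lambda\cdot(v_\mu+v_{\mu^*})$, with $\lambda\neq\mu$ and $\lambda^*\neq\mu$ guaranteed by $\#\mathrm{IBr}\,H\geq\min\{p,3\}$, exist. Your reading of the $p=2$ versus $p\geq3$ dichotomy is correct, but without the coinvariant algebra and the rank-one Cartan matrix the argument does not close.
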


\indent By computing the $p$-hyperfocal subgroup of $(\mathbb{Z}/m\mathbb{Z})^n\rtimes H$ in the case $H$ is a $p'$-group, we get the main result, which provides a new example verifying the conjecture posed in \cite{HK}.\\

\begin{thm}[See Theorem \ref{main2}]
    Assume that $p^l\geq n$ and $H$ is a $p'$-subgroup of $\mathfrak{S}_n$. Denote by $R$ the $p$-hyperfocal subgroup of $(\mathbb{Z}/m\mathbb{Z})^n\rtimes H$. Then $k[(\mathbb{Z}/m\mathbb{Z})^n\rtimes H]$ is $\tau$-tilting finite if and only if $R$ has rank $\leq1$.\\
\end{thm}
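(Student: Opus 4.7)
My plan is to compute the $p$-hyperfocal subgroup $R$ of $G = (\mathbb{Z}/m\mathbb{Z})^n \rtimes H$ explicitly, then verify each direction of the equivalence separately, using Theorem \ref{intro} for the ``$\tau$-tilting infinite'' side and a tensor-product transfer argument to plug the cases it misses.

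First I would compute $R$. Under the hypotheses, the $p$-Sylow $P = (\mathbb{Z}/p^l\mathbb{Z})^n$ of $G$ is normal and the full $p$-local data is just the action of $H$ on the coordinate set $\{1, \dots, n\}$ by permutations. I would follow the definition of $p$-hyperfocal from \cite{HK}, which in this setup yields an explicit subgroup of $P$ decomposing along the $H$-orbit structure of $\{1, \dots, n\}$. Reading off the contribution of each orbit gives a closed-form expression for $\mathrm{rank}\,R$ in terms of the orbit sizes and fixed points, and consequently a concrete combinatorial characterization of ``$R$ has rank $\leq 1$'' as a short list of admissible pairs $(n, H)$.

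For the ``only if'' direction, suppose $\mathrm{rank}\,R \geq 2$. When $\#\mathrm{IBr}\,H \geq \min\{p, 3\}$, Theorem \ref{intro} gives $\tau$-tilting infiniteness of $kG$ immediately. The remaining cases have small $H$ (trivial, or an involution in odd characteristic); for these I would use the $H$-orbit decomposition together with the eigenspace decomposition of $(\mathbb{Z}/m\mathbb{Z})^n$ under $H$ to express $G$ as a direct product whose factors include a Cartesian power $(\mathbb{Z}/m\mathbb{Z})^r$ with $r \geq 2$. Then $kG$ has the local algebra $k[(\mathbb{Z}/p^l\mathbb{Z})^r]$ as a tensor factor, which possesses a simple module $S$ with $\dim\mathrm{Ext}^1(S,S) \geq 2$, a condition known to force $\tau$-tilting infiniteness; tensoring with any nonzero algebra preserves this Ext-obstruction, so $kG$ is $\tau$-tilting infinite.

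For the ``if'' direction, the combinatorial characterization from the first step reduces the problem to the listed admissible $(n, H)$. In each such case, the structure of $G$ and the block decomposition of $kG$ show that every block has cyclic defect group (or is semisimple), so the blocks are Brauer tree algebras and hence $\tau$-tilting finite. The main obstacle will be the small-$H$ part of the ``only if'' direction: Proposition \ref{itinfsi}'s Cartan-matrix criterion fails here (the Cartan matrix becomes positive definite with no Nakayama-invariant non-positive vector), so one cannot invoke the Cartan route and must instead extract $\tau$-tilting infiniteness from the eigenspace decomposition and the Ext-obstruction described above, for which the explicit description of $R$ is essential.
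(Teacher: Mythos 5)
Your overall architecture (compute $\mathrm{rank}\,R$ from the orbit structure, then treat the two directions separately, invoking Theorem \ref{intro} where $\#\mathrm{IBr}\,H\geq\min\{p,3\}$) parallels the paper, and the rank computation you envisage is Proposition \ref{rk}. But both arguments you substitute for the paper's key lemmas fail. For the ``if'' direction, it is false that the blocks of $k[(\mathbb{Z}/m\mathbb{Z})^n\rtimes H]$ have cyclic defect groups when $\mathrm{rank}\,R\leq1$: since $O_p(G)=(\mathbb{Z}/p^l\mathbb{Z})^n$ is contained in every defect group, every block has defect group equal to the full Sylow subgroup $(\mathbb{Z}/p^l\mathbb{Z})^n$, of rank $n$; already for $n=2$, $H=\langle(12)\rangle$ and $p$ odd the blocks are wild, yet $kG$ is $\tau$-tilting finite. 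The whole point of the hyperfocal philosophy is that $\tau$-tilting finiteness is \emph{not} read off the defect group, and the paper's ``if'' direction is a one-line appeal to Proposition \ref{tfhyp} ($R$ cyclic implies $kG$ $\tau$-tilting finite), which you need here in place of Theorem \ref{repblock}.

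For the ``only if'' direction, the residual small-$H$ case is where your proof collapses. The direct factor you extract from the eigenspace decomposition is a copy of $(\mathbb{Z}/m\mathbb{Z})^r$ on which $H$ acts trivially, i.e.\ it sits inside $C_{(\mathbb{Z}/m\mathbb{Z})^n}(H)$, which by Proposition \ref{rk} is exactly the complement of $R$ and so carries no obstruction. Worse, the asserted criterion is not a theorem: $k[(\mathbb{Z}/p^l\mathbb{Z})^r]$ is a \emph{local} algebra, and local algebras are always $\tau$-tilting finite (the only basic support $\tau$-tilting modules are $\Lambda$ and $0$), no matter how large $\dim_k\mathrm{Ext}^1(S,S)$ is; group algebras of $p$-groups are the standard illustration that large self-extensions of simples do not force $\tau$-tilting infiniteness, and tensoring cannot create an obstruction that is not there. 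The paper instead runs the implication forwards: $\tau$-tilting finiteness forces $\#\mathrm{IBr}\,H\leq2$ by Corollary \ref{main1}, hence $H$ is trivial or $\cong\mathbb{Z}/2\mathbb{Z}$, hence $C_{(\mathbb{Z}/m\mathbb{Z})^n}(H)$ has large rank and $\mathrm{rank}\,R\leq1$ by Proposition \ref{rk}. You correctly sensed that this small-$H$ case is the delicate spot where the Cartan criterion of Proposition \ref{itinfsi} is silent; the subcase where $H\cong\mathbb{Z}/2\mathbb{Z}$ is generated by a product of two or more disjoint transpositions (so the centralizer has rank $n-r$ with $r\geq2$) is handled by the abelian-$H$ theorem of \cite{HK}, not by an $\mathrm{Ext}$ computation on a local tensor factor.
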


\noindent\textbf{Notation.} Unless otherwise specified, a symbol $\otimes$ means a tensor product over $k$. For an algebra $\Lambda$, we denote the opposite algebra of $\Lambda$ by $\Lambda^{\mathrm{op}}$, the category of $\Lambda$-modules by $\Lambda\textrm{-mod}$, the full subcategory of $\Lambda\textrm{-mod}$ consisting of all projective $\Lambda$-modules by $\Lambda\textrm{-proj}$, the homotopy category of bounded complexes of projective $\Lambda$-modules by $K^b(\Lambda\textrm{-}\mathrm{proj})$, and the derived category of bounded complexes of $\Lambda$-modules by $D^b(\Lambda\textrm{-mod})$. For $M\in\Lambda\textrm{-mod}$, we denote the number of nonisomorphic indecomposable direct summands of $M$ by $|M|$, the $k$-dual of $M$ by $DM$ and the Auslander-Reiten translate of $M$ by $\tau M$ (see \cite{ARS} for the definition and more details). For a complex $T$, we denote a shifted complex by $i$ degrees of $T$ by $T[i]$.\\

\section{Preliminaries}

In this section, let $\Lambda$ be an algebra and we recall the basic materials of $\tau$-tilting theory and $\tau$-tilting finite algebras.\\

\subsection{$\tau$-Tilting theory}

Adachi, Iyama and Reiten \cite{AIR} introduced $\tau$-tilting theory and gave a correspondence among support $\tau$-tilting modules, two-term silting complexes and functorially finite torsion classes. In addition, it has been found that support $\tau$-tilting modules corresponds bijectively to other many objects such as left finite semibricks \cite{A1}, two-term simple-minded collections \cite{KY}, intermediate $t$-structures of length heart \cite{BY}, and more. In this subsection, we only explain the main results in \cite{AIR}.\\

\begin{dfn}
    A module $M\in\Lambda\textrm{-}\mathrm{mod}$ is a \textit{support $\tau$-tilting module} if $M$ satisfies the following conditions:
    \begin{enumerate}
        \setlength{\parskip}{0cm}
        \setlength{\itemsep}{0cm}
        \item $M$ is $\tau$-rigid, that is, $\mathrm{Hom}_\Lambda(M,\tau M)=0$.
        \item There exists $P\in\Lambda\textrm{-}\mathrm{proj}$ such that $\mathrm{Hom}_{\Lambda}(P,M)=0$ and $|P|+|M|=|\Lambda|$.
    \end{enumerate}
    When we specify the projective $\Lambda$-module $P$ in (b), we write a support $\tau$-tilting module $M$ as a pair $(M,P)$.\\
\end{dfn}

\begin{prop}[{\cite[Theorem 2.7]{AIR}}]
    The set of isoclasses of basic support $\tau$-tilting $\Lambda$-modules is a partially ordered set with respect to the following relation:
    $$M\geq M'\Leftrightarrow there\;exists\;a\;surjective\;homomorphism\;from\;a\;direct\;sum\;of\;copies\;of\;M\;to\;M'.$$
\end{prop}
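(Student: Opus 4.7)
The plan is to verify the three standard axioms of a partial order: reflexivity, transitivity, and antisymmetry. Reflexivity is immediate since the identity map $\mathrm{id}_M$ realizes $M$ as a surjective image of a single copy of $M$. For transitivity, given surjections $\phi\colon M^{\oplus a}\twoheadrightarrow M'$ and $\psi\colon (M')^{\oplus b}\twoheadrightarrow M''$, the composition $\psi\circ\phi^{\oplus b}\colon M^{\oplus ab}\twoheadrightarrow M''$ is again surjective, whence $M\geq M''$.

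The main obstacle is antisymmetry, so assume $M\geq M'$ and $M'\geq M$. My approach is to rephrase the relation as $M\geq M'\Leftrightarrow M'\in\mathrm{Fac}\,M$, where $\mathrm{Fac}\,M$ denotes the full subcategory of $\Lambda\textrm{-mod}$ consisting of quotients of finite direct sums of copies of $M$. Since $\mathrm{Fac}\,M$ is closed under passage to quotients of its direct sums, the two hypotheses immediately yield $\mathrm{Fac}\,M=\mathrm{Fac}\,M'$.

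To conclude $M\cong M'$, I would invoke the key characterization that the indecomposable direct summands of a basic support $\tau$-tilting module coincide, up to isomorphism, with the indecomposable $\mathrm{Ext}$-projective objects of the torsion class $\mathrm{Fac}\,M$. This packages two ingredients: the identity $\mathrm{Hom}_\Lambda(M,\tau M)=0$ translates via the Auslander--Reiten formula into $\mathrm{Ext}$-projectivity of $M$ inside $\mathrm{Fac}\,M$, and the cardinality condition $|M|+|P|=|\Lambda|$ in the definition of support $\tau$-tilting forces maximality, so that every indecomposable $\mathrm{Ext}$-projective object of $\mathrm{Fac}\,M$ already appears as a summand of $M$. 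Establishing this intrinsic recovery of summands from $\mathrm{Fac}\,M$ is the hardest piece of the argument; once it is in hand, $\mathrm{Fac}\,M=\mathrm{Fac}\,M'$ together with the basicness of $M$ and $M'$ yields $M\cong M'$, completing antisymmetry.
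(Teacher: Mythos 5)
The paper does not prove this statement; it is quoted verbatim from \cite[Theorem 2.7]{AIR} as a preliminary, so there is no in-paper argument to compare against. Judged on its own terms, your proposal is correct and follows the standard route of Adachi--Iyama--Reiten: reflexivity and transitivity are elementary exactly as you say, and antisymmetry is reduced to the injectivity of $M\mapsto\mathrm{Fac}\,M$ on basic support $\tau$-tilting modules. Your reduction is clean: $M\geq M'$ is by definition $M'\in\mathrm{Fac}\,M$, which gives $\mathrm{Fac}\,M'\subseteq\mathrm{Fac}\,M$, hence equality of the two torsion classes under the symmetric hypotheses. The one substantive ingredient you invoke --- that $M$ is recovered from $\mathrm{Fac}\,M$ as the direct sum of the indecomposable $\mathrm{Ext}$-projectives of that torsion class --- is precisely the bijection half of \cite[Theorem 2.7]{AIR}, and your two-part sketch of it is the right one: $\tau$-rigidity plus the Auslander--Reiten formula gives $\mathrm{Ext}$-projectivity of $M$ in $\mathrm{Fac}\,M$ (for $N\in\mathrm{Fac}\,M$ one precomposes a map $N\to\tau M$ with an epimorphism $M^{\oplus a}\twoheadrightarrow N$), while the count $|M|+|P|=|\Lambda|$ together with the Auslander--Smal\o{} bound on the number of indecomposable $\mathrm{Ext}$-projectives forces $M$ to exhaust them. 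You correctly flag this recovery statement as the hard core and leave it as a cited black box rather than proving it; that is acceptable for a result of this kind, but be aware that it carries essentially all the weight of the theorem, so the proposal is a correct reduction rather than a self-contained proof.
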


\vspace{3mm}

\begin{dfn}
    A complex $T\in K^b(\Lambda\textrm{-}\mathrm{proj})$ is \textit{tilting} (resp. \textit{silting}) if $T$ satisfies the following conditions:
    \begin{enumerate}
        \setlength{\parskip}{0cm}
        \setlength{\itemsep}{0cm}
        \item $T$ is \textit{pretilting} (resp. \textit{presilting}), that is, $\mathrm{Hom}_{K^b(\Lambda\textrm{-}\mathrm{proj})}(T,T[i])=0$ for all integers $i\neq 0$ (resp. $i>0$).
        \item The full subcategory $\mathrm{add}\,T$ of $K^b(\Lambda\textrm{-}\mathrm{proj})$ consisting of all complexes isomorphic to direct sums of direct summands of $T$ generates $K^b(\Lambda\textrm{-}\mathrm{proj})$ as a triangulated category.\\
    \end{enumerate}
\end{dfn}

\begin{prop}[{\cite[Theorem 2.11]{AI}}]
    The set of isoclasses of basic silting complexes in $K^b(\Lambda\textrm{-}\mathrm{proj})$ is a partially ordered set with respect to the following relation:
    $$T\geq T'\Leftrightarrow\mathrm{Hom}_{K^b(\Lambda\textrm{-}\mathrm{proj})}(T,T'[i])=0\;for\;all\;integers\;i>0.$$
\end{prop}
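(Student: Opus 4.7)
The plan is to verify the three axioms of a partial order---reflexivity, transitivity, and antisymmetry---on the set of isoclasses of basic silting complexes. Reflexivity $T\geq T$ is immediate from the presilting condition built into the definition of silting, so the substance lies in the other two axioms.

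For transitivity, the natural approach is to reformulate the relation categorically. I would introduce the subcategory
\[ T^{\perp_{>0}} := \{X\in K^b(\Lambda\textrm{-}\mathrm{proj}) \mid \mathrm{Hom}_{K^b(\Lambda\textrm{-}\mathrm{proj})}(T,X[i])=0 \text{ for all } i>0\}, \]
so that $T\geq T'$ is equivalent to $\mathrm{add}\,T'\subseteq T^{\perp_{>0}}$. A direct check shows that $T^{\perp_{>0}}$ is closed under extensions, direct summands, finite direct sums, and the shift $[-1]$. Given $T\geq T'$ and $T'\geq T''$, the task reduces to showing $T''\in T^{\perp_{>0}}$. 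Since $T'$ is silting, $\mathrm{add}\,T'$ generates $K^b(\Lambda\textrm{-}\mathrm{proj})$ as a triangulated category; combined with the hypothesis $T'\geq T''$, one can iteratively resolve $T''$ by taking cones of $\mathrm{add}\,T'$-approximations, producing a finite filtration whose factors are shifts of summands of $T'$ in non-positive degrees. All such factors lie in $T^{\perp_{>0}}$, and the closure properties above then force $T''\in T^{\perp_{>0}}$, i.e.\ $T\geq T''$.

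For antisymmetry, suppose $T\geq T'$ and $T'\geq T$. Then both $\mathrm{Hom}(T,T'[i])$ and $\mathrm{Hom}(T',T[i])$ vanish for all $i>0$, so $T\oplus T'$ is presilting. Since $\mathrm{add}\,T$ already generates $K^b(\Lambda\textrm{-}\mathrm{proj})$ as a triangulated category, so does $\mathrm{add}(T\oplus T')$, whence $T\oplus T'$ is silting. Appealing to the Aihara--Iyama theorem that every basic silting complex has the same number of indecomposable summands as $\Lambda$, the basic representative of $T\oplus T'$ has the same number of summands as $T$ itself. This forces $\mathrm{add}\,T=\mathrm{add}(T\oplus T')=\mathrm{add}\,T'$, and therefore $T\cong T'$ since both are basic.

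The main obstacle will be transitivity---specifically, constructing the resolution of $T''$ whose factors all sit in $T^{\perp_{>0}}$. This is the step where the silting hypothesis on $T'$ (rather than merely presilting) is essential, as it is precisely what ensures both that $\mathrm{add}\,T'$ generates $K^b(\Lambda\textrm{-}\mathrm{proj})$ and that the resolution can be arranged in non-positive degrees; it forms the technical heart of the proof of \cite[Theorem 2.11]{AI}.
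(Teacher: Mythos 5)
The paper gives no proof of this proposition; it is quoted from \cite{AI}. Your skeleton (reflexivity from the presilting condition, transitivity via a filtration of $T''$ by shifts of $T'$, antisymmetry by a counting argument) is the standard one. Your antisymmetry step takes a slightly different route from \cite{AI}: rather than proving directly that two mutually orthogonal silting subcategories coincide, you observe that $T\oplus T'$ is silting and invoke the theorem that all basic silting objects in $K^b(\Lambda\textrm{-}\mathrm{proj})$ have exactly $|\Lambda|$ indecomposable summands. That is a legitimate (if heavier) argument, since the rank result does not depend on the partial order.

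The transitivity argument as written, however, contains a sign error that breaks it. The subcategory $T^{\perp_{>0}}$ is closed under the shift $[1]$, \emph{not} $[-1]$: if $\mathrm{Hom}(T,X[i])=0$ for all $i>0$, then $\mathrm{Hom}(T,X[-1][i])=\mathrm{Hom}(T,X[i-1])$ need not vanish at $i=1$, since that is $\mathrm{Hom}(T,X)$. Correspondingly, the filtration of $T''$ must have factors in $\mathrm{add}\,T'[j]$ with $j\geq 0$, not $j\leq 0$: for $j<0$ the factor $T'[j]$ satisfies $\mathrm{Hom}(T,T'[j][i])=\mathrm{Hom}(T,T'[i+j])$, which for $0<i\leq -j$ is a Hom-space in non-positive degree and has no reason to vanish, so such factors do not lie in $T^{\perp_{>0}}$ and the closure properties give you nothing. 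The correct intermediate statement, proved in \cite{AI}, is that $T'\geq T''$ forces $T''$ to lie in the iterated extension closure of $\mathrm{add}\,T'[j]$ for $0\leq j\leq \ell$ (some $\ell$); each such factor does lie in $T^{\perp_{>0}}$ because $i+j>0$, and closure under extensions and summands then yields $T\geq T''$. Beyond the sign issue, the existence of this filtration is the genuinely hard step --- one first uses that $T'$ is silting to place $T''$ in an extension closure of $\mathrm{add}\,T'[j]$ for $-\ell\leq j\leq\ell$, and then uses the hypothesis $T'\geq T''$ to discard the strictly negative shifts --- and your sketch asserts it rather than proves it. Flagging it as the technical heart is fair for an outline, but as stated the degrees are wrong and the verification that the factors lie in $T^{\perp_{>0}}$ fails.
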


\vspace{3mm}

\indent We say that a complex $T\in K^b(\Lambda\textrm{-}\mathrm{proj})$ is \textit{two-term} if its $i$-th term $T^i$ vanishes for all $i\neq-1,0$. We denote by $\mathrm{s}\tau\textrm{-}\mathrm{tilt}\,\Lambda$ the set of isoclasses of basic support $\tau$-tilting $\Lambda$-modules and by $2\textrm{-}\mathrm{silt}\,\Lambda$ the set of isoclasses of basic two-term silting complexes in $K^b(\Lambda\textrm{-}\mathrm{proj})$.\\

\begin{thm}[{\cite[Theorem 3.2]{AIR}}]
    There exists an isomorphism as partially ordered sets
    $$
    \begin{tikzcd}[row sep = 1mm]
        \mathrm{s}\tau\textrm{-}\mathrm{tilt}\,\Lambda \rar[leftrightarrow, "\sim"] \dar[phantom, "\rotatebox{90}{$\in$}"] & 2\textrm{-}\mathrm{silt}\,\Lambda, \dar[phantom, "\rotatebox{90}{$\in$}"]\\
        (M,P) \rar[mapsto] & (P^M_1\oplus P\xrightarrow{(f\;0)}P^M_0), \\
        \mathrm{Cok}\,g \rar[mapsfrom] & (P^{-1}\xrightarrow{g}P^0), 
    \end{tikzcd}
    $$
    where $P^M_1\xrightarrow{f}P^M_0\rightarrow M\rightarrow0$ is the minimal projective presentation of $M$.\\
\end{thm}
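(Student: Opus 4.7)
The plan is to establish the bijection by verifying both directions are well-defined, checking they are mutually inverse, and matching the partial orders. The central technical input is a \emph{bridge lemma} identifying, for two-term complexes $T_M$ and $T_N$ arising from minimal projective presentations of modules $M$ and $N$, the Hom group $\mathrm{Hom}_{K^b(\Lambda\text{-}\mathrm{proj})}(T_M, T_N[1])$ with $\mathrm{Hom}_\Lambda(M, \tau N)$.

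First I would show that the forward map lands in $2\text{-}\mathrm{silt}\,\Lambda$. Given a support $\tau$-tilting pair $(M,P)$, the complex $T = (P_1^M \oplus P \xrightarrow{(f\; 0)} P_0^M)$ decomposes as $T_M \oplus P[1]$, so vanishing of $\mathrm{Hom}_{K^b}(T, T[i])$ for $i \geq 2$ is automatic by the two-term structure, while the case $i=1$ reduces, via the bridge lemma applied to the four summands
\[
\mathrm{Hom}_{K^b}(T_M, T_M[1]),\quad \mathrm{Hom}_{K^b}(T_M, P[2]),\quad \mathrm{Hom}_{K^b}(P[1], T_M[1]),\quad \mathrm{Hom}_{K^b}(P[1], P[2]),
\]
to the $\tau$-rigidity of $M$ together with $\mathrm{Hom}_\Lambda(P, M) = 0$. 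For the generation condition, the count $|T| = |M| + |P| = |\Lambda|$ attains the maximal number of indecomposable summands that a basic presilting complex can have, so $T$ is silting by the standard criterion of Aihara--Iyama \cite{AI}.

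Next I would construct the inverse. Given a basic two-term silting $T = (P^{-1} \xrightarrow{g} P^0)$, split off the maximal direct summand of the form $Q[1]$ with $Q$ projective, writing $T = T' \oplus Q[1]$; the remaining indecomposable summands of $T'$ all have nonzero differentials, and minimality then forces $T'$ to be the two-term complex attached to the minimal projective presentation of $M := \mathrm{Cok}\,g$. The pair $(M, Q)$ is verified to be support $\tau$-tilting by reversing the identifications of the previous step, and the two constructions are mutually inverse by inspection. For order preservation, one translates $T \geq T' \iff \mathrm{Hom}_{K^b}(T, T'[i]) = 0$ for $i > 0$ via the bridge lemma into a condition involving $\mathrm{Hom}_\Lambda(-, \tau -)$; a standard argument then identifies this with the existence of a surjective homomorphism from an object of $\mathrm{add}\,M$ onto $M'$.

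The main obstacle is the bridge lemma. One clean route is via the Nakayama functor $\nu$ on $K^b(\Lambda\text{-}\mathrm{proj})$ together with the Serre-type duality $\mathrm{Hom}_{K^b(\Lambda\text{-}\mathrm{proj})}(X, \nu Y) \cong D\,\mathrm{Hom}_{K^b(\Lambda\text{-}\mathrm{proj})}(Y, X)$, combined with the identification of $\tau N$ as a cohomology group of $\nu T_N$ coming from $\tau = D\mathrm{Tr}$. Alternatively, one computes homotopy classes of chain maps $T_M \to T_N[1]$ directly: such a map is a morphism $P_0^M \to P_1^N$ modulo those factoring through $P_0^N$, and applying $\mathrm{Hom}_\Lambda(M, -)$ to the defining sequence of $\tau N$ identifies this quotient with $\mathrm{Hom}_\Lambda(M, \tau N)$.
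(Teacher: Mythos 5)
The paper offers no proof of this statement: it is quoted directly from Adachi--Iyama--Reiten with a citation, so the only thing to compare your sketch against is the original argument in \cite[Section 3]{AIR}. Your plan does reproduce the architecture of that argument (a Hom-vanishing ``bridge'' lemma for two-term complexes, well-definedness of both maps, mutual inverseness, and matching of the orders), and nothing in it is conceptually misguided.

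Three places need repair, and they are exactly the places where \cite{AIR} has to work. First, the bridge lemma as you state it has its variance reversed: with the shift convention $T[1]^i=T^{i+1}$, a chain map $T_M\to T_N[1]$ has a single nonzero component $P_1^M\to P_0^N$ (not $P_0^M\to P_1^N$), and passing to homotopy classes identifies $\mathrm{Hom}_{K^b(\Lambda\text{-}\mathrm{proj})}(T_M,T_N[1])$ with $\mathrm{coker}\bigl(\mathrm{Hom}_\Lambda(P_0^M,N)\to\mathrm{Hom}_\Lambda(P_1^M,N)\bigr)$, which for a \emph{minimal} presentation of $M$ vanishes if and only if $\mathrm{Hom}_\Lambda(N,\tau M)=0$ --- not $\mathrm{Hom}_\Lambda(M,\tau N)=0$, and the underlying isomorphism is with a $k$-dual of a stable Hom space rather than with $\mathrm{Hom}_\Lambda(M,\tau N)$ itself. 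This slip is invisible on the diagonal (so presiltingness of $T_M\oplus P[1]$ still comes out correctly), but it will bite in the order-preservation step, where you must land on $\mathrm{Hom}_\Lambda(M',\tau M)=0$ and $\mathrm{Hom}_\Lambda(P,M')=0$. Second, ``a basic presilting complex with $|\Lambda|$ indecomposable summands is silting'' is not a general Aihara--Iyama criterion: for arbitrary presilting complexes this is not available, and the version you need is a specifically two-term statement proved in \cite{AIR} by a Bongartz-type completion of two-term presilting complexes; that completion is a genuinely nontrivial step you are outsourcing to a theorem that does not quite exist in the generality you invoke. Third, for the inverse map you need the lemma that a basic two-term silting complex, once its summands of the form $Q[1]$ are split off, \emph{is} the minimal projective presentation of its cokernel; ``minimality then forces'' this only after an actual argument (one must rule out contractible summands and check the radical conditions on both terms). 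With those three points filled in, your sketch becomes the standard proof.
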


\begin{dfn}
    A full subcategory $\mathcal{T}$ of $\Lambda\textrm{-mod}$ is a \textit{torsion class} if it is closed under taking factors and extensions, that is, for any short exact sequence $0\rightarrow L\rightarrow M\rightarrow N\rightarrow 0$ in $\Lambda\textrm{-mod}$, the following hold:
    \begin{enumerate}
        \setlength{\parskip}{0cm}
        \setlength{\itemsep}{0cm}
            \item $M\in\mathcal{T}$ implies $N\in\mathcal{T}$.
            \item $L,N\in\mathcal{T}$ implies $M\in\mathcal{T}$.\\
    \end{enumerate}
\end{dfn}

\begin{dfn}
    A full subcategory $\mathcal{A}$ of $\Lambda\textrm{-mod}$ is \textit{functorially finite} if for any $M\in\Lambda\textrm{-}\mathrm{mod}$, there exist $X,\,Y\in\mathcal{A}$ and $f:M\rightarrow X,\,g:Y\rightarrow M\in\Lambda\textrm{-}\mathrm{mod}$ such that $-\circ f:\mathrm{Hom}_\Lambda(X,\mathcal{A})\rightarrow\mathrm{Hom}_\Lambda(M,\mathcal{A})$ and $g\circ -:\mathrm{Hom}_\Lambda(\mathcal{A},Y)\rightarrow\mathrm{Hom}_\Lambda(\mathcal{A},M)$ are both surjective.\\
\end{dfn}

We denote by $\textrm{f-tors}\,\Lambda$ the set of functorially finite torsion classes in $\Lambda\textrm{-mod}$.\\

\begin{thm}[{\cite[Theorem 2.7]{AIR}}]
    There exists a bijection
    $$
    \begin{tikzcd}[row sep = 2mm]
        \mathrm{s}\tau\textrm{-}\mathrm{tilt}\,\Lambda \rar[rightarrow, "\sim"] \dar[phantom, "\rotatebox{90}{$\in$}"] & \mathrm{f\textrm{-}tors}\,\Lambda, \dar[phantom, "\rotatebox{90}{$\in$}"]\\
        M \rar[mapsto] & \mathrm{Fac}\,M,
    \end{tikzcd}
    $$
    where $\mathrm{Fac}\,M$ is a full subcategory of $\Lambda\mathrm{\textrm{-}mod}$ consisting of factor modules of a direct sum of copies of $M$.\\
\end{thm}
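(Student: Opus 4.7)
The plan is to establish well-definedness, injectivity, and surjectivity of the assignment $M\mapsto\mathrm{Fac}\,M$. For well-definedness I would take a support $\tau$-tilting pair $(M,P)$ and check that $\mathcal{T}:=\mathrm{Fac}\,M$ is a functorially finite torsion class. Closure under quotients is immediate. For closure under extensions, the $\tau$-rigidity $\mathrm{Hom}_\Lambda(M,\tau M)=0$ propagates along any surjection $M^{\oplus k}\twoheadrightarrow N$ to give $\mathrm{Hom}_\Lambda(N,\tau M)=0$ for every $N\in\mathcal{T}$; Auslander--Reiten duality then forces $\mathrm{Ext}^1_\Lambda(M,N)=0$ for all such $N$, and chasing the long exact sequence attached to an extension $0\to L\to E\to N\to 0$ with $L,N\in\mathcal{T}$ shows that $E$ is again a quotient of a power of $M$. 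Functorial finiteness follows because $M$ itself furnishes right $\mathrm{add}\,M$-approximations inside $\mathcal{T}$, while left approximations of arbitrary modules are built from minimal projective presentations together with the projective summand $P$.

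For injectivity I would recover $(M,P)$ intrinsically from $\mathcal{T}$. The $\mathrm{Ext}^1$-projective objects of $\mathcal{T}$ turn out to be exactly $\mathrm{add}\,M$, since any $X\in\mathcal{T}$ with $\mathrm{Ext}^1_\Lambda(X,\mathcal{T})=0$ satisfies $\mathrm{Hom}_\Lambda(X,\tau M)=0$ via AR-duality and hence factors through an object of $\mathrm{add}\,M$. Meanwhile $P$ is characterised as the unique (up to isomorphism) basic projective $\Lambda$-module with $\mathrm{Hom}_\Lambda(P,\mathcal{T})=0$. The numerical identity $|M|+|P|=|\Lambda|$ then guarantees that $(M,P)$ is determined by $\mathcal{T}$.

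For surjectivity, given $\mathcal{T}\in\textrm{f-tors}\,\Lambda$ let $M_\mathcal{T}$ be the basic additive generator of its indecomposable $\mathrm{Ext}^1$-projectives, which are finite in number by a classical Auslander--Smal\o{} argument exploiting the right $\mathcal{T}$-approximations furnished by functorial finiteness. Extension-closure of $\mathcal{T}$ yields $\mathrm{Ext}^1_\Lambda(M_\mathcal{T},M_\mathcal{T})=0$, and Auslander--Reiten duality upgrades this to $\mathrm{Hom}_\Lambda(M_\mathcal{T},\tau M_\mathcal{T})=0$ once projective summands are handled via the Nakayama functor, so $M_\mathcal{T}$ is $\tau$-rigid. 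Pairing $M_\mathcal{T}$ with the basic projective $P$ annihilated by $\mathrm{Hom}_\Lambda(-,\mathcal{T})$ produces a candidate pair, and one checks $\mathrm{Fac}\,M_\mathcal{T}=\mathcal{T}$ using that every object of $\mathcal{T}$ is covered by a right $\mathrm{add}\,M_\mathcal{T}$-approximation that is surjective by $\mathrm{Ext}^1$-projectivity.

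The main obstacle, and the technical heart of \cite{AIR}, is verifying the numerical identity $|M_\mathcal{T}|+|P|=|\Lambda|$ that upgrades $\tau$-rigidity to \emph{support} $\tau$-tilting. I would handle this through a Bongartz-type completion inside $\mathcal{T}$: showing that any $\tau$-rigid module admits a completion to a support $\tau$-tilting module whose Fac-image is again $\mathcal{T}$, so that the candidate $(M_\mathcal{T},P)$ already realises the maximal possible number of summands. This is exactly the point where one must combine the global bound $|M|\leq|\Lambda|$ for $\tau$-rigid $M$ with the intrinsic characterisation of $\mathrm{Ext}^1$-projectives in $\mathcal{T}$ obtained in the injectivity step.
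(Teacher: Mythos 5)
This theorem is quoted background from \cite{AIR}; the paper supplies no proof of its own, so there is nothing internal to compare your argument against. Your sketch follows the standard route of \cite{AIR} (building on Auslander--Smal\o{}): well-definedness via $\tau$-rigidity implying extension-closure of $\mathrm{Fac}\,M$, the inverse map $\mathcal{T}\mapsto$ (Ext-projectives of $\mathcal{T}$), and the Bongartz-type completion to settle the count $|M|+|P|=|\Lambda|$ --- so it is essentially the same approach as the cited source. Two steps are stated imprecisely, though the correct ingredients are within reach of what you set up: first, $\mathrm{Ext}^1_\Lambda(M_\mathcal{T},M_\mathcal{T})=0$ alone does \emph{not} yield $\mathrm{Hom}_\Lambda(M_\mathcal{T},\tau M_\mathcal{T})=0$; you need the full Ext-projectivity $\mathrm{Ext}^1_\Lambda(M_\mathcal{T},\mathcal{T})=0$ together with $M_\mathcal{T}\in\mathcal{T}$, so that $\mathrm{Ext}^1_\Lambda(M_\mathcal{T},\mathrm{Fac}\,M_\mathcal{T})=0$ and the Auslander--Smal\o{} duality applies. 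Second, in the injectivity step, $\mathrm{Hom}_\Lambda(X,\tau M)=0$ for an Ext-projective $X$ does not by itself place $X$ in $\mathrm{add}\,M$; one must invoke the maximality of support $\tau$-tilting pairs among $\tau$-rigid pairs (equivalently the bound $|N|\leq|\Lambda|$ applied to $M\oplus X$ and the identity $|M|+|P|=|\Lambda|$), which you only bring in later. Likewise, the surjectivity of the right $\mathrm{add}\,M_\mathcal{T}$-approximation of an object of $\mathcal{T}$ is usually derived by factoring a projective cover through a left $\mathcal{T}$-approximation; as written, your justification is close to circular. None of these is a wrong turn, but each is a real lemma in \cite{AIR} rather than an immediate consequence of what precedes it.
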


\subsection{$\tau$-Tilting finite algebras}

Demonet, Iyama and Jasso \cite{DIJ} introduced a new class of algebras, \textit{$\tau$-tilting finite algebras}, and found that $\tau$-tilting finiteness relates to brick finiteness and functorially finiteness of all the torsion classes.\\

\begin{dfn}
    An algebra $\Lambda$ is \textit{$\tau$-tilting finite} if there exist only finitely many basic support $\tau$-tilting modules up to isomorphism, that is, $\#\textrm{s$\tau$-tilt}\,\Lambda<\infty$.\\
\end{dfn}

\begin{dfn}
    A module $M\in\Lambda\textrm{-mod}$ is a \textit{brick} if the endomorphism algebra $\mathrm{End}_\Lambda(M)$ is isomorphic to $k$.\\
\end{dfn}

\begin{thm}[{\cite[Theorems 3.8 and 4.2]{DIJ}}]
    For an algebra $\Lambda$, the following are equivalent:
    \begin{enumerate}
        \setlength{\parskip}{0cm}
        \setlength{\itemsep}{0cm}
        \item $\Lambda$ is $\tau$-tilting finite.
        \item The number of isoclasses of bricks in $\Lambda\textrm{-}\mathrm{mod}$ is finite.
        \item Every torsion class in $\Lambda\textrm{-}{\mathrm{mod}}$ is functorially finite.\\
    \end{enumerate}
\end{thm}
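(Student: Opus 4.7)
The strategy is to use the bijection $\mathrm{s}\tau\textrm{-}\mathrm{tilt}\,\Lambda\xrightarrow{\sim}\textrm{f-tors}\,\Lambda$, $M\mapsto\mathrm{Fac}\,M$, from the preceding theorem as the bridge between conditions (a) and (c): condition (a) is equivalent to the finiteness of the set of functorially finite torsion classes. I would prove the two equivalences (a) $\Leftrightarrow$ (c) and (a) $\Leftrightarrow$ (b) separately, in both cases relating the combinatorics of the poset $\mathrm{s}\tau\textrm{-}\mathrm{tilt}\,\Lambda$ to the objects on the other side.

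For (a) $\Leftrightarrow$ (c), the substantive direction is (a) $\Rightarrow$ (c). Given an arbitrary torsion class $\mathcal{T}$, I would consider its upper and lower approximations by functorially finite torsion classes, namely the intersection of all $\mathrm{Fac}\,M$ containing $\mathcal{T}$ and the smallest torsion class containing all $\mathrm{Fac}\,M$ contained in $\mathcal{T}$. Under the finiteness hypothesis, a finite sequence of mutations forces these two approximations to coincide with $\mathcal{T}$ itself, so $\mathcal{T}$ is functorially finite. For (c) $\Rightarrow$ (a), since every torsion class is then of the form $\mathrm{Fac}\,M$, it suffices to bound the total number of torsion classes; I would rule out an infinite strictly ascending chain $\mathcal{T}_1\subsetneq\mathcal{T}_2\subsetneq\cdots$ by observing that its union $\bigcup_i\mathcal{T}_i$ would fail to admit a minimal right approximation of some $\Lambda$-module, contradicting the functorial finiteness supplied by (c).

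For (a) $\Leftrightarrow$ (b), I would introduce a brick labelling of the Hasse quiver of $\mathrm{s}\tau\textrm{-}\mathrm{tilt}\,\Lambda$: to each covering relation $M\gtrdot M'$ I attach the unique brick $B(M,M')$ lying in $\mathrm{Fac}\,M\cap(\mathrm{Fac}\,M')^{\perp}$, and then show this assignment to be both injective and surjective onto isoclasses of bricks. Injectivity, combined with the fact that a finite poset has finitely many covering relations, gives (a) $\Rightarrow$ (b). Surjectivity gives (b) $\Rightarrow$ (a), since infinitely many bricks would force infinitely many edges, and hence infinitely many vertices, in the Hasse quiver.

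The main obstacle I anticipate is the surjectivity of the brick labelling map: given an arbitrary brick $B$, one must exhibit a covering relation labelled by $B$. The natural candidates are the smallest torsion class $\mathrm{T}(B)$ containing $B$ together with the largest torsion class $\mathrm{T}'(B)$ strictly contained in $\mathrm{T}(B)$ and not containing $B$; the argument then reduces to verifying (i) that both $\mathrm{T}(B)$ and $\mathrm{T}'(B)$ are functorially finite, so that the AIR bijection converts them into support $\tau$-tilting modules $M\gtrdot M'$, and (ii) that the resulting cover carries label exactly $B$. Establishing functorial finiteness of $\mathrm{T}(B)$ for an arbitrary brick $B$ is the technical heart of the argument, and forces one to develop the interplay between brick combinatorics and the $\tau$-tilting theoretic machinery of \cite{AIR}.
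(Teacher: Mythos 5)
The paper offers no proof of this statement---it is quoted verbatim from \cite[Theorems 3.8 and 4.2]{DIJ}---so your sketch has to be measured against the original source. Your treatment of $\mathrm{(a)}\Leftrightarrow\mathrm{(c)}$ does follow the route of \cite{DIJ}: use the bijection $\mathrm{s}\tau\textrm{-}\mathrm{tilt}\,\Lambda\cong\mathrm{f\textrm{-}tors}\,\Lambda$, prove $\mathrm{(a)}\Rightarrow\mathrm{(c)}$ by squeezing an arbitrary torsion class between functorially finite ones, and prove $\mathrm{(c)}\Rightarrow\mathrm{(a)}$ via an infinite chain. Two details there need care. First, an infinite poset need not contain an infinite chain, so you must invoke the local structure of mutation (every basic support $\tau$-tilting module has exactly $|\Lambda|$ mutations, and a finite connected component of the Hasse quiver is already everything) to extract one. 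Second, once you have a strictly ascending chain $\mathcal{T}_1\subsetneq\mathcal{T}_2\subsetneq\cdots$ whose union is functorially finite, the clean contradiction is that $\bigcup_i\mathcal{T}_i=\mathrm{Fac}\,M$ forces the single module $M$ to lie in some $\mathcal{T}_j$, whence $\mathrm{Fac}\,M\subseteq\mathcal{T}_j$ and the chain stabilizes; the appeal to a failing minimal right approximation is not needed and is harder to justify.

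The genuine gap is in $\mathrm{(a)}\Leftrightarrow\mathrm{(b)}$. The brick labelling of covering relations is surjective when $\Lambda$ is $\tau$-tilting finite---and you correctly identify the functorial finiteness of $\mathrm{T}(B)$ as the hard point---but it is \emph{not} injective: already for the path algebra of type $A_2$ the lattice of torsion classes is a pentagon with five covering relations and only three bricks, each simple module labelling two distinct arrows. Your bookkeeping is also reversed: surjectivity (every brick occurs as a label) is what yields $\mathrm{(a)}\Rightarrow\mathrm{(b)}$, since finitely many vertices give finitely many arrows and hence finitely many labels, while injectivity is what your argument for $\mathrm{(b)}\Rightarrow\mathrm{(a)}$ actually requires---and it fails. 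The implication $\mathrm{(b)}\Rightarrow\mathrm{(a)}$ therefore needs a different mechanism, namely the injection of \cite[Theorem 4.1]{DIJ} sending an indecomposable $\tau$-rigid module $X$ to the brick $X/\mathrm{rad}_{\mathrm{End}_\Lambda(X)}X$: finitely many bricks bound the number of indecomposable $\tau$-rigid modules, and every basic support $\tau$-tilting module is a direct sum of at most $|\Lambda|$ of these. As written, your proof of $\mathrm{(b)}\Rightarrow\mathrm{(a)}$ does not go through.
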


Let $P_1,\ldots,P_t$ be all the nonisomorphic indecomposable projective $\Lambda$-modules and $K_0(\Lambda\textrm{-}\mathrm{proj})$ the Grothendieck group of $\Lambda\textrm{-}\mathrm{proj}$. Then the Grothendieck group $K_0(K^b(\Lambda\textrm{-}\mathrm{proj}))$ of $K^b(\Lambda\textrm{-}\mathrm{proj})$ is isomorphic to $K_0(\Lambda\textrm{-}\mathrm{proj})$ and both are free abelian groups of rank $t$ since they have a basis $\{[P_1],\ldots,[P_t]\}$ where each $[P_i]$ denotes the equivalence class of $P_i$ in the Grothendieck group. Henceforth, we identify these Grothendieck groups.\\

\begin{dfn}
    For a presilting complex $T=T_1\oplus\cdots\oplus T_r$ with each $T_i$ indecomposable, we define a \textit{$g$-cone} $C(T)\subset K_0(\Lambda\textrm{-}\mathrm{proj})\otimes_\mathbb{Z}\mathbb{R}$ of $T$ as the following:
    $$C(T):=\mathbb{R}_{\geq0}\cdot[T_1]+\cdots+\mathbb{R}_{\geq0}\cdot[T_r]\subset K_0(\Lambda\textrm{-}\mathrm{proj})\otimes_\mathbb{Z}\mathbb{R}.$$
\end{dfn}

\vspace{3mm}

\noindent The $\tau$-tilting finiteness of algebras is characterized in terms of $g$-cones as follows:\\

\begin{thm}[{\cite[Theorem 4.7]{A2}}]\label{asai}
    An algebra $\Lambda$ is $\tau$-tilting finite if and only if
    $$K_0(\Lambda\textrm{-}\mathrm{proj})\otimes_\mathbb{Z}\mathbb{R}=\bigcup_{T\in\mathrm{2\textrm{-}silt}\,\Lambda}C(T).$$
\end{thm}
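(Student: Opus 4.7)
The plan is to prove the equivalence by viewing the collection $\{C(T) : T \in 2\textrm{-}\mathrm{silt}\,\Lambda\}$ as a fan in $K_0(\Lambda\textrm{-}\mathrm{proj}) \otimes_\mathbb{Z} \mathbb{R}$ and showing that this fan is complete exactly when $\Lambda$ is $\tau$-tilting finite. First I would record the fan-theoretic input. A basic two-term silting complex $T = T_1 \oplus \cdots \oplus T_t$ has $t = |\Lambda|$ indecomposable summands with linearly independent $g$-vectors, so $C(T)$ is a full-dimensional simplicial cone. Silting mutation at a summand $T_i$ replaces $T_i$ by an indecomposable summand whose $g$-vector lies across the hyperplane spanned by $\{[T_j] : j \neq i\}$, so the mutation neighbour $C(\mu_i(T))$ shares a codimension-one face with $C(T)$ and lies on the opposite side; using the AIR bijection $\mathrm{s}\tau\textrm{-}\mathrm{tilt}\,\Lambda \leftrightarrow 2\textrm{-}\mathrm{silt}\,\Lambda$ from the excerpt one checks that distinct maximal $g$-cones meet only in common faces.

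For the forward direction I would assume $\tau$-tilting finiteness, so that $2\textrm{-}\mathrm{silt}\,\Lambda$ is finite by the AIR bijection and $U := \bigcup_T C(T)$ is a finite union of closed polyhedral cones, hence closed in $K_0(\Lambda\textrm{-}\mathrm{proj}) \otimes \mathbb{R}$. I would then show that $U$ is always dense: for a generic $x$, the linear functional $\langle x, - \rangle$ defines a stability condition whose semistable heart is a length category, and its simple objects correspond via the AIR bijection to the indecomposable summands of some two-term silting complex $T$ with $x$ in the interior of $C(T)$. Being both closed and dense, $U$ equals the whole space.

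For the reverse direction I would prove the contrapositive. If $\Lambda$ is $\tau$-tilting infinite, there are infinitely many full-dimensional cones $C(T)$; projectivising to the unit sphere $S^{t-1} \subset K_0 \otimes \mathbb{R}$ and using compactness, their images accumulate at some point $\bar y$. Because each cone $C(T)$ has only finitely many codimension-one faces and therefore only finitely many mutation neighbours, only finitely many maximal cones can meet a small neighbourhood of any point already lying in some $C(T)$. Hence the accumulation point $\bar y$ cannot lie in any $C(T)$, producing a point of $K_0(\Lambda\textrm{-}\mathrm{proj}) \otimes \mathbb{R}$ outside $U$.

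The main obstacle is the density claim in the forward direction: explicitly producing, from a generic numerical vector $x \in K_0 \otimes \mathbb{R}$, an actual two-term silting complex whose $g$-cone contains $x$. This requires stability/semibrick machinery --- or, alternatively, a careful silting-mutation argument following a line segment from a fixed interior point of some chosen cone to $x$ and crossing walls one at a time --- to bridge the purely numerical data on $K_0$ and the two-term silting world. The reverse direction, by contrast, is essentially topological once the fan structure of $g$-cones is in place.
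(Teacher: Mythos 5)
The paper does not prove this statement at all: it is quoted verbatim from \cite[Theorem 4.7]{A2} (Asai), so there is no internal proof to compare against. Judging your attempt on its own merits, the forward direction contains a fatal error. You claim that the union $U=\bigcup_{T}C(T)$ is \emph{always} dense and that $\tau$-tilting finiteness only contributes closedness of $U$. Density of $U$ is precisely the definition of $g$-tameness (see the definition immediately after this theorem in the paper), and it genuinely fails for some algebras: for a wild hereditary algebra such as the $3$-Kronecker algebra, the $g$-cones accumulate on two limiting rays and leave an open cone entirely outside $\overline{U}$; the paper's own Remark after Proposition \ref{tinf} exhibits a weakly symmetric algebra that is not $g$-tame. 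Your proposed mechanism for density also fails: a ``generic'' $x$ in the uncovered region is not in any chamber of the wall-and-chamber structure, because in the purely non-rigid region the walls are dense, so there is no two-term silting complex whose simple-minded data recovers $x$. The correct forward argument must use finiteness essentially, e.g.: with finitely many full-dimensional cones forming a fan, $U$ is closed, and since every almost-complete two-term presilting complex has exactly two completions to a two-term silting complex (Adachi--Iyama--Reiten), every facet of every maximal cone is shared with a second maximal cone, so $U$ has no free boundary facets, is open as well as closed, and hence is all of $K_0(\Lambda\textrm{-}\mathrm{proj})\otimes_{\mathbb{Z}}\mathbb{R}$. Your alternative suggestion of walking along a segment and crossing walls one at a time is essentially this argument, but it works only because finiteness bounds the number of walls crossed.

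The reverse direction is closer to being right but still rests on an unproved local-finiteness statement: that a point lying in some $C(T)$ has a neighbourhood meeting only finitely many maximal cones. Your justification (``each cone has only finitely many mutation neighbours'') does not rule out non-adjacent cones accumulating onto a face of $C(T)$; the actual statement is a nontrivial result in \cite{A2}, proved via $\tau$-tilting reduction, showing that near a point in the relative interior of $C(V)$ for a presilting $V$ the fan is governed by the reduced algebra and only cones containing $V$ can appear. With that input your compactness argument on the sphere does produce an uncovered accumulation point, so the contrapositive goes through. As it stands, however, the forward direction is based on a false general claim and the proof is not salvageable without replacing it.
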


\vspace{3mm}

\noindent In view of the structure of $g$-cones of two-term silting complexes, we consider the generalized class of algebras, \textit{$g$-tame algebras}, and compare $\tau$-tilting finite algebras and $g$-tame algebras with the classical representation types: representation finite algebras and tame algebras.\\

\begin{dfn}
    An algebra $\Lambda$ is said to be \textit{$g$-tame} if the union of $g$-cones of 2-term silting complexes is dense in $K_0(\Lambda\textrm{-}\mathrm{proj})\otimes_\mathbb{Z}\mathbb{R}$, that is,
    $$K_0(\Lambda\textrm{-}\mathrm{proj})\otimes_\mathbb{Z}\mathbb{R}=\overline{\bigcup_{T\in\textrm{2-silt}\,\Lambda}C(T)}.$$
\end{dfn}

\vspace{3mm}

\begin{thm}[{\cite[Theorem 4.1]{PYK}}]\label{pyk}
    A tame algebra is $g$-tame.\\
\end{thm}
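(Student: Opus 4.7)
The plan is to prove $g$-tameness by combining the wall-and-chamber structure on $V := K_0(\Lambda\textrm{-}\mathrm{proj}) \otimes_\mathbb{Z} \mathbb{R}$ with a finiteness property of bricks enforced by tameness. For each brick $M \in \Lambda\textrm{-}\mathrm{mod}$, introduce the wall
\begin{equation*}
    W_M := \{\theta \in V : \langle\theta,[M]\rangle = 0 \text{ and } M \text{ is } \theta\text{-semistable in the sense of King}\},
\end{equation*}
where $\langle -, - \rangle$ denotes the canonical pairing between $K_0(\Lambda\textrm{-}\mathrm{proj})$ and $K_0(\Lambda\textrm{-}\mathrm{mod})$. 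I would use the theory of Br\"{u}stle--Smith--Treffinger and Asai, in which the $g$-cones of two-term silting complexes are realised as (closures of) full-dimensional chambers in the complement of $\bigcup_M W_M$, with adjacent chambers related by silting mutations.

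The key input from tameness is a finiteness statement about the walls. For each total dimension $d$, the definition of a tame algebra gives finitely many one-parameter families of indecomposable modules of dimension $d$, each with constant dimension vector, so only finitely many dimension vectors of bricks of dimension $d$ occur. Hence all walls $W_M$ with $\dim M = d$ are contained in a finite union of linear hyperplanes of the form $H_{[M]} := \{\theta : \langle\theta,[M]\rangle = 0\}$. A careful mutation argument then shows, using this constraint, that every non-empty open $U \subseteq V$ meets the $g$-cone $C(T)$ of some $T \in 2\textrm{-}\mathrm{silt}\,\Lambda$, which is exactly $g$-tameness.

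The principal obstacle is the mutation argument. It is easy to see that the union $\bigcup_M W_M$ has empty interior in $V$ (as a countable union of hyperplanes in $\mathbb{R}^t$ it has Lebesgue measure zero), so the complement $V \setminus \bigcup_M W_M$ is dense. What is delicate is that not every chamber in this complement automatically arises as a $g$-cone of a two-term silting complex: in the wild case, there can be chambers ``disconnected'' from the initial chamber $C(\Lambda)$ by infinitely many accumulating walls, and no silting complex reaches them. For tame algebras, the finiteness of brick families per dimension must be leveraged to show that the reachable chambers---those connected to $C(\Lambda)$ by a finite silting-mutation path---form a dense subset of $V$. This is the crux of the proof and is where the tame hypothesis enters in an essential way; a purely formal density-of-hyperplanes argument would otherwise apply to every algebra and give the wrong conclusion.
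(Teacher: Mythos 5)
The paper does not actually prove this statement; it is quoted verbatim from \cite[Theorem 4.1]{PYK}, so there is no internal proof to compare your attempt against. Judged on its own terms, your proposal has a genuine gap, and in fact you concede it yourself.

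The step you present as ``the key input from tameness'' carries no information: for \emph{any} finite dimensional algebra with $t$ simples, the set of dimension vectors of modules of total dimension $d$ is a subset of the finitely many compositions of $d$ into $t$ nonnegative parts, so the walls coming from bricks of a fixed total dimension always lie in finitely many hyperplanes $H_{[M]}$. Consequently the only place where tameness could enter your argument is the unspecified ``careful mutation argument,'' which is precisely the content of the theorem and is left unproved. Density of $V\setminus\bigcup_M W_M$ is not the issue (as you note, that holds for every algebra); the issue is that the $g$-cones $C(T)$, $T\in2\textrm{-}\mathrm{silt}\,\Lambda$, need not exhaust, or even be dense in, the set of chambers --- there can be chambers of the wall-and-chamber structure that are not of the form $C(T)$, and nothing in your sketch rules this out for tame algebras. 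The actual proof in \cite{PYK} does not proceed by wall-crossing at all: it uses the generic decomposition of $g$-vectors via varieties of two-term complexes, together with the vanishing of the generic $E$-invariant for tame algebras (resting on Crawley-Boevey's one-parameter families and the work of Geiss, Labardini-Fragoso and Schr\"oer), to show that the $g$-vectors admitting a rigid generic presentation are dense; that is where tameness is genuinely used. To repair your approach you would need to supply an argument of comparable depth, not a refinement of the hyperplane-counting observation.
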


\begin{thm}[See subsection 6.2 in \cite{EJR}]\label{tameblock}
    Tame blocks of group algebras of finite groups are $\tau$-tilting finite.\\
\end{thm}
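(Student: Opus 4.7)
The plan is to combine the classification of tame blocks of group algebras with Erdmann's explicit description of such blocks, and then verify $\tau$-tilting finiteness in each resulting Morita equivalence class. By a theorem of Bondarenko--Drozd, a block $B$ of a group algebra of a finite group is tame but not representation finite precisely when $p=2$ and the defect group of $B$ is dihedral, semidihedral or generalized quaternion; representation finite blocks are trivially $\tau$-tilting finite, so it suffices to handle these three defect types.

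For each type, Erdmann's classification presents $B$, up to Morita equivalence, as a symmetric algebra drawn from a finite list of families specified by quivers with relations and parametrized by discrete combinatorial data. I would then go through this list family by family and compute the poset $\mathrm{s}\tau\textrm{-}\mathrm{tilt}\,B$, or equivalently, via Theorem \ref{asai}, check that the $g$-fan of $B$ in $K_0(B\textrm{-}\mathrm{proj})\otimes_\mathbb{Z}\mathbb{R}$ consists of only finitely many cones that cover the whole space. Here Theorem \ref{pyk} is already a useful reduction: since $B$ is tame, its $g$-fan is automatically dense, so the task reduces to excluding the accumulation of $g$-cones near the boundary. In the dihedral case the blocks are Morita equivalent to Brauer graph algebras of specific shapes, whose two-term silting complexes can be enumerated by combinatorial arguments on the underlying Brauer graph; in the semidihedral and quaternion cases the Erdmann algebras have sufficiently small bounded presentations to allow a direct, case-by-case enumeration of basic two-term silting complexes.

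The main obstacle is the case-by-case nature of the argument for semidihedral and generalized quaternion defect groups: Erdmann's list is long and involves discrete parameters (and occasional scalar ambiguities in the relations), so a uniform conceptual proof appears difficult without additional input. A natural shortcut would be to invoke derived invariance of $\tau$-tilting finiteness within each derived equivalence class and reduce the verification to a small set of ``minimal'' representatives of each family, but derived invariance of $\tau$-tilting finiteness over symmetric algebras is precisely the open question of Miyamoto--Wang recalled in the introduction, so at present the explicit enumeration of two-term silting complexes case by case seems unavoidable.
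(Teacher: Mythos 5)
Your overall strategy (reduce via the Bondarenko--Drozd/Erdmann classification to an explicit list of symmetric algebras given by quivers with relations, then verify $\tau$-tilting finiteness family by family) is the right starting point, and it is indeed how the cited source \cite{EJR}, Subsection 6.2, begins. But, as you yourself flag in your last paragraph, the plan as stated does not terminate: each of Erdmann's families of algebras of dihedral, semidihedral and quaternion type is parametrized by unbounded discrete data (essentially the order of the defect group), so a ``direct, case-by-case enumeration of basic two-term silting complexes'' would have to be carried out for infinitely many pairwise non-Morita-equivalent algebras. The way out is not derived invariance of $\tau$-tilting finiteness (which, as you correctly note, is open), but the reduction theorem of Eisele--Janssens--Raedschelders --- precisely the result quoted as \cite[Theorem 11]{EJR} in the proof of Proposition \ref{stauc} of this paper: if $I$ is an ideal generated by central elements contained in the Jacobson radical, then $\mathrm{s}\tau\textrm{-}\mathrm{tilt}\,\Lambda\cong\mathrm{s}\tau\textrm{-}\mathrm{tilt}\,\Lambda/I$. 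For each of Erdmann's families the discrete parameters enter the relations only through such central radical elements, so all members of a family collapse to one and the same small quotient algebra, and $\tau$-tilting finiteness then needs to be checked only for a genuinely finite list of small algebras. That is the missing idea, and without it your argument has a gap exactly where you acknowledge one.

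A secondary point: your appeal to Theorem \ref{pyk} ($g$-tameness of tame algebras) buys nothing here. Density of the union of $g$-cones does not help in proving that there are only finitely many of them --- the paper explicitly records that $g$-tame does not imply $\tau$-tilting finite --- so ``excluding the accumulation of $g$-cones near the boundary'' is a restatement of the problem rather than a reduction. Your proposed treatment of the dihedral case via Brauer graph algebras and \cite{AAC} is sound, since the relevant Brauer graphs are small and fixed; it is the semidihedral and quaternion cases where the reduction theorem above is indispensable.
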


\noindent By the above theorems, the relationship among representation types is as follows:
$$
\begin{tikzcd}[column sep = 15mm]
    \textrm{representation finite algebras} \rar[Rightarrow] \dar[Rightarrow] & \textrm{tame algebras}, \dar[Rightarrow, "\textrm{Theorem} \ref{pyk}"] \\
    \textrm{$\tau$-tilting finite algebras} \rar[Rightarrow, "\textrm{Theorem} \ref{asai}"] & \textrm{$g$-tame algebras}.
\end{tikzcd}
$$
In particular, we have the following hierarchy of representation types of blocks of group algebras of finite groups:
$$
\begin{tikzcd}
    \textrm{rep. fin. blocks} \rar[Rightarrow] & \textrm{tame blocks} \rar[Rightarrow, "\textrm{Theorem} \ref{tameblock}"] &[10mm] \textrm{$\tau$-tilt. fin. blocks} \rar[Rightarrow, "\textrm{Theorem} \ref{asai}"] &[10mm] \textrm{$g$-tame blocks}.
\end{tikzcd}
$$
Note that the converse of each implication above does not hold.\\
\indent We can find that blocks are representation finite or tame by looking at their defect groups as the following:\\

\begin{thm}[See THEOREM in Introduction in {\cite{E}}]\label{repblock}
    Let $B$ be a block of a group algebra $kG$ of a finite group $G$ and $D$ a defect group of $B$. Then the following hold:
    \begin{enumerate}
        \setlength{\parskip}{0cm}
        \setlength{\itemsep}{0cm}
        \item $B$ is representation finite if and only if $D$ is cyclic.
        \item $B$ is representation infinite and tame if and only if $p=2$ and $D$ is isomorphic to a dihedral, semidihedral or generalized quaternion group.\\
    \end{enumerate}
\end{thm}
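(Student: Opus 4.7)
The plan is to treat each part separately, exploiting the fact that the representation type of a block is invariant under Morita (and even derived) equivalence, and that the basic algebra of a block is largely controlled by the defect group $D$ together with the fusion system on $D$. In both directions of both parts, I would reduce via Brauer correspondence and source-algebra techniques to the study of a small list of explicit basic algebras attached to $D$, and then analyse each by exhibiting (in)decomposables or invoking a classical tame/wild dichotomy.

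For part (a), the ``if'' direction I would deduce from Dade's theorem that a block with cyclic defect group is Morita equivalent to a Brauer tree algebra; such algebras are Nakayama-like and representation-finite by a direct Auslander-Reiten quiver argument. For the ``only if'' direction I would argue contrapositively: if $D$ is not cyclic, then $D$ contains a copy of $C_p\times C_p$, and the group algebra $k[C_p\times C_p]$ is representation infinite (this goes back to Higman's theorem on the representation type of $kG$); a transport-of-structure argument, using either trivial source modules or Green correspondence combined with Puig's equivalence on source algebras, then pushes representation infiniteness into $B$ itself.

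For part (b), the ``if'' direction is the content of Erdmann's extensive classification of blocks of tame representation type: for $p=2$ and $D$ dihedral, semidihedral, or generalized quaternion, she exhibits a finite list of basic algebras (the algebras of dihedral, semidihedral and quaternion type), each confirmed tame by presentation as a special biserial algebra or close variant. The ``only if'' direction requires showing that for all remaining non-cyclic $D$ (namely $p$ odd with $D$ non-cyclic, or $p=2$ and $D$ outside Erdmann's list) the block is wild; this is reduced, by the same transport arguments as in (a), to showing wildness of small explicit group algebras such as $k[(C_p)^3]$ for $p$ odd or of $k[C_4\times C_2]$ for $p=2$, which follows from the Bondarenko-Drozd-Ringel analysis of $p$-group algebras.

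The main obstacle is undoubtedly Erdmann's classification in part (b): it occupies a full monograph and rests on a delicate case-by-case determination of the basic algebras of each block from the possible fusion patterns on $D$, with no realistic chance of reproducing it from scratch in a short proof. A secondary obstacle is the ``transport of representation type'' step used in both ``only if'' directions, which requires either a stable or a source-algebra equivalence linking $B$ to a subblock whose representation type is already known; depending on the setting this uses Puig's theorem on source algebras together with ad hoc constructions of one-parameter families of indecomposables, and the correct choice of subgroup to descend to is itself a nontrivial input.
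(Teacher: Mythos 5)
The paper does not prove this statement at all: Theorem \ref{repblock} is quoted verbatim from the THEOREM in the introduction of Erdmann's monograph \cite{E} and is used as a black box, so there is no in-paper argument to compare yours against. Your sketch is the standard literature proof, and you correctly identify its two load-bearing inputs (Dade's theorem on blocks with cyclic defect group, and Erdmann's classification of blocks of dihedral, semidihedral and quaternion type), as well as the honest admission that the latter cannot be reproduced in a short proof. In that sense your outline is a reasonable account of why the theorem is true, rather than a proof.

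There is, however, one concrete mathematical gap in your ``only if'' argument for part (a). You claim that a non-cyclic defect group $D$ contains a copy of $C_p\times C_p$. This is false for $p=2$: a generalized quaternion $2$-group is non-cyclic but has a unique subgroup of order $2$, so it contains no Klein four subgroup. (Indeed, a $p$-group with a unique subgroup of order $p$ is cyclic or generalized quaternion.) Your reduction to $k[C_p\times C_p]$ therefore does not cover the quaternion case, which must be handled separately --- for instance by invoking Higman's theorem directly for $kD$ itself (the group algebra of any non-cyclic $p$-group, quaternion included, is representation infinite, the quaternion case following from its tameness), or by the standard Knörr-vertex/Brauer-correspondent argument that produces $B$-modules with vertex $D$. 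With that repair the outline of part (a) is sound; part (b) remains, as you say, entirely dependent on Erdmann's case analysis.
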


\noindent It is natural to wonder what kinds of subgroups control $\tau$-tilting finiteness and $g$-tameness of group algebras or their blocks. We still do not know anything about what determines $g$-tameness of group algebras or their blocks, but we found that to treat $\tau$-tilting finiteness of group algebras, we should consider so-called \textit{$p$-hyperfocal subgroups}. Denote by $O^p(G)$ the smallest normal subgroup of $G$ such that its quotient is a $p$-group.\\

\begin{dfn}
    A \textit{$p$-hyperfocal subgroup} of a finite group $G$ is the intersection of a Sylow $p$-subgroup and $O^p(G)$.\\
\end{dfn}

\begin{prop}[{\cite[Proposition 2.15]{HK}}]\label{tfhyp}
    Let $R$ be a $p$-hyperfocal subgroup of a finite group $G$. Then a group algebra $kG$ is $\tau$-tilting finite if one of the following holds:
    \begin{enumerate}
        \setlength{\parskip}{0cm}
        \setlength{\itemsep}{0cm}
        \item $R$ is cyclic.
        \item $p=2$ and $R$ is isomorphic to a dihedral, semidihedral or generalized quaternion group.
    \end{enumerate}
\end{prop}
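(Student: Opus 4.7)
The plan is to reduce the problem to the classical representation-type classification of blocks recalled in Theorems~\ref{repblock} and~\ref{tameblock}, using the $p$-hyperfocal subgroup $R$ to control the picture after passing from $G$ to $O^p(G)$. A preliminary group-theoretic observation is that $R$ is itself a Sylow $p$-subgroup of $O^p(G)$: since $G/O^p(G)$ is a $p$-group, $P$ surjects onto it, giving $G = P\cdot O^p(G)$ and $P/R \cong G/O^p(G)$, whence $|R| = |O^p(G)|_p$.

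Next --- and this is the main nontrivial step --- I would show that $\tau$-tilting finiteness of $kO^p(G)$ implies that of $kG$. Since $[G:O^p(G)]$ is a $p$-power, $kG$ is a $p$-group crossed product of $kO^p(G)$; I would use induction from $kO^p(G)$ to $kG$, together with block-theoretic covering results in the style of Clifford, to port support $\tau$-tilting modules upward and bound $\#\mathrm{s}\tau\textrm{-}\mathrm{tilt}\,kG$ in terms of $\#\mathrm{s}\tau\textrm{-}\mathrm{tilt}\,kO^p(G)$ and $[G:O^p(G)]$, both of which will then be finite.

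Finally, I would decompose $kO^p(G)$ into blocks. Every defect group of such a block is, up to conjugacy, contained in the Sylow $p$-subgroup $R$ of $O^p(G)$. In case~(a) every defect group is cyclic, so Theorem~\ref{repblock}(a) gives representation finiteness --- and hence $\tau$-tilting finiteness --- for each block. In case~(b) every defect group is a subgroup of a dihedral, semidihedral, or generalized quaternion $2$-group; a standard subgroup classification shows that any such subgroup is itself cyclic, Klein four (which is a dihedral $2$-group), or again of dihedral, semidihedral, or generalized quaternion type, so Theorem~\ref{repblock} together with Theorem~\ref{tameblock} yields $\tau$-tilting finiteness of each block. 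The principal obstacle is the reduction in the second paragraph: transferring $\tau$-tilting finiteness from $kO^p(G)$ up to $kG$ is not formal from the preliminaries and requires input on how Clifford theory interacts with $\tau$-tilting pairs, which is not developed in the excerpt.
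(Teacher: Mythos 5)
The paper does not actually prove this proposition: it is imported verbatim from \cite[Proposition 2.15]{HK}, so there is no in-text argument to compare against. That said, your skeleton is the right one and matches how the cited result is proved: observe that $R$ is a Sylow $p$-subgroup of $O^p(G)$ (your first paragraph is correct), reduce $\tau$-tilting finiteness of $kG$ to that of $kO^p(G)$, and then finish blockwise using Theorems~\ref{repblock} and~\ref{tameblock} together with the fact that every subgroup of a cyclic group is cyclic and every subgroup of a dihedral, semidihedral or generalized quaternion $2$-group is again cyclic or of one of these types. The endgame is complete as you state it (including the Klein four case, and using that a finite direct product of $\tau$-tilting finite blocks is $\tau$-tilting finite).

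The genuine gap is exactly the step you flag: the passage from $kO^p(G)$ up to $kG$. This is not formal, and your sketch of it is slightly off target --- the known result is not a counting bound on $\#\mathrm{s}\tau\textrm{-}\mathrm{tilt}\,kG$ in terms of $\#\mathrm{s}\tau\textrm{-}\mathrm{tilt}\,kO^p(G)$ and $[G:O^p(G)]$, but a poset isomorphism between $\mathrm{s}\tau\textrm{-}\mathrm{tilt}\,kG$ and the subposet of $G$-stable support $\tau$-tilting $kN$-modules for any normal subgroup $N$ of $p$-power index (so in particular $\#\mathrm{s}\tau\textrm{-}\mathrm{tilt}\,kG\leq\#\mathrm{s}\tau\textrm{-}\mathrm{tilt}\,kO^p(G)$). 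Establishing this requires a Clifford-theoretic analysis of $\tau$-rigid pairs across the extension, which is a theorem in its own right; it appears in the paper's bibliography as \cite{KK3} and \cite{H} and is the engine behind \cite[Proposition 2.15]{HK}. So your proof becomes complete once you replace the second paragraph by an appeal to that reduction theorem rather than attempting to rederive it; as written, the argument has a hole precisely there.
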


\vspace{3mm}

\noindent In \cite{HK}, we conjectured that the converse of Proposition \ref{tfhyp} also holds, and verified that it is true in the case $G=P\rtimes H$, where $P$ is an abelian $p$-group and $H$ is an abelian $p'$-group acting on $P$. In Section 4, we will see that the conjecture also holds for a group algebra $k[(\mathbb{Z}/m\mathbb{Z})^n\rtimes H]$ under the assumption that $p^l\geq n$, $H$ is a $p'$-subgroup of $\mathfrak{S}_n$, and the action of $H$ permutes the entries of $(\mathbb{Z}/m\mathbb{Z})^n$.\\

\section{Cartan matrices and $\tau$-tilting finiteness}

In this section, we discuss the relationship between Cartan matrices and $\tau$-tilting finiteness.\\
\indent Let $\Lambda$ be an algebra and $P_1,\ldots, P_t$ all the nonisomorphic indecomposable projective $\Lambda$-modules. When $\Lambda$ is selfinjective, we can define the Nakayama permutation $\nu\in\mathfrak{S}_t$ of $\Lambda$ such that $P_i\cong D\mathrm{Hom}_\Lambda(P_{\nu(i)},\Lambda)$ as a $\Lambda$-module for every $1\leq i\leq t$.\\

\begin{dfn}
    A \textit{Cartan matrix} $C_\Lambda$ of $\Lambda$ is the $t\times t$ matrix whose $(i,j)$-entry is $\mathrm{dim}_k\mathrm{Hom}_\Lambda(P_i,P_j)$.\\
\end{dfn}

\begin{prop}\label{tinf}
    Assume that $\Lambda$ is weakly symmetric.
    \begin{enumerate}
        \setlength{\parskip}{0cm}
        \setlength{\itemsep}{0cm}
        \item If $\Lambda$ is $\tau$-tilting finite, then the Cartan matrix $C_\Lambda$ of $\Lambda$ is positive definite.
        \item If $\Lambda$ is $g$-tame, then the Cartan matrix $C_\Lambda$ of $\Lambda$ is positive semidefinite.
    \end{enumerate}
\end{prop}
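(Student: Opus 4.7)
The plan is to reinterpret the quadratic form $v \mapsto v^{\top}C_\Lambda v$ on $K_0(\Lambda\textrm{-}\mathrm{proj})\otimes_\mathbb{Z}\mathbb{R}$ as an Euler form on $2$-term complexes in $K^b(\Lambda\textrm{-}\mathrm{proj})$, and then exploit the silting condition together with weak symmetry to show that this form is strictly positive on the relative interior of each $g$-cone. Once this is in hand, Theorem~\ref{asai} handles (a) directly, and a density argument handles (b). Note that weak symmetry makes $C_\Lambda$ itself symmetric, via the identity $D\mathrm{Hom}_\Lambda(P_i,-)\cong\mathrm{Hom}_\Lambda(-,\nu P_i)$ combined with $\nu P_i\cong P_i$, so the notions ``positive definite'' and ``positive semidefinite'' are unambiguous.

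First I would establish an Euler-form identity: for any two $2$-term complexes $T=(P^{-1}\to P^0)$ and $T'=(Q^{-1}\to Q^0)$ in $K^b(\Lambda\textrm{-}\mathrm{proj})$, writing $[T]=[P^0]-[P^{-1}]$ in the basis $\{[P_1],\ldots,[P_t]\}$, a direct computation on the total Hom-complex yields
\begin{equation*}
  [T]^{\top}C_\Lambda[T'] \;=\; \sum_{i\in\mathbb{Z}}(-1)^i\dim_k\mathrm{Hom}_{K^b(\Lambda\textrm{-}\mathrm{proj})}(T,T'[i]),
\end{equation*}
where only $i\in\{-1,0,1\}$ contribute. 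The key lemma I would then prove is: \emph{for any $2$-term silting complex $T=T_1\oplus\cdots\oplus T_t$,}
\begin{equation*}
  [T_i]^{\top}C_\Lambda[T_j] \;=\; \dim_k\mathrm{Hom}_{K^b(\Lambda\textrm{-}\mathrm{proj})}(T_i,T_j) \;\geq\; 0,
\end{equation*}
with diagonal entries $\geq 1$. The vanishing $\mathrm{Hom}(T_i,T_j[1])=0$ is the silting condition. To eliminate the remaining term I would invoke the Auslander--Reiten/Serre duality $\mathrm{Hom}(T_i,T_j[-1])\cong D\mathrm{Hom}(T_j,\nu T_i[1])$ available for selfinjective $\Lambda$, and then show $\nu T_i\cong T_i$ in $K^b(\Lambda\textrm{-}\mathrm{proj})$: since $\Lambda$ is weakly symmetric, $\nu$ acts trivially on $K_0$, so $\nu T$ is a $2$-term silting complex with the same $g$-vector as $T$; using that $2$-term silting complexes are determined by their $g$-vectors, this gives $\nu T\cong T$ and hence $\nu T_i\cong T_i$. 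Silting then gives $\mathrm{Hom}(T_j,\nu T_i[1])=\mathrm{Hom}(T_j,T_i[1])=0$.

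For (a) I would argue by contradiction. If $C_\Lambda$ is not positive definite, pick nonzero $v\in K_0(\Lambda\textrm{-}\mathrm{proj})\otimes_\mathbb{Z}\mathbb{R}$ with $v^{\top}C_\Lambda v\leq 0$. By Theorem~\ref{asai}, $\tau$-tilting finiteness places $v\in C(T)$ for some $2$-term silting $T=T_1\oplus\cdots\oplus T_t$, so $v=\sum_i a_i[T_i]$ with $a_i\geq 0$ and not all zero. The key lemma then forces
\begin{equation*}
  v^{\top}C_\Lambda v \;=\; \sum_{i,j}a_ia_j\dim_k\mathrm{Hom}(T_i,T_j) \;\geq\; \sum_i a_i^2\dim_k\mathrm{End}(T_i) \;>\; 0,
\end{equation*}
a contradiction. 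For (b), if $v^{\top}C_\Lambda v<0$ at some $v$, continuity of the form combined with $g$-tameness yields a nearby $v'\in C(T)$ for some $2$-term silting $T$ with $v'^{\top}C_\Lambda v'<0$, again contradicting the key lemma.

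The main technical hurdle will be the Serre duality step: writing out the Auslander--Reiten duality at the level of the Hom-complex carefully enough that the shift $\nu T_i[1]$ appears in the correct place, and invoking (or reproving in this setting) the fact that $2$-term silting complexes are determined by their $g$-vectors in order to upgrade the identity $[\nu T_i]=[T_i]$ in $K_0$ to an actual isomorphism $\nu T_i\cong T_i$ in $K^b(\Lambda\textrm{-}\mathrm{proj})$. Once this is settled, both statements reduce to the non-negativity lemma together with a straightforward cone/density argument.
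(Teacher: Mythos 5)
Your proposal is correct and follows essentially the same route as the paper: both identify $v^\top C_\Lambda v$ with the alternating sum $\sum_{i\in\mathbb{Z}}(-1)^i\dim_k\mathrm{Hom}_{K^b(\Lambda\textrm{-}\mathrm{proj})}(T',T'[i])$ for a two-term presilting complex sitting over the $g$-cone containing $v$, and both use weak symmetry to kill the degree $\pm1$ contributions --- the paper by citing \cite[Theorem A.4]{Ai}, you by reproving that step via Serre duality together with the $g$-vector uniqueness of \cite[Theorem 5.5]{AIR}. The only cosmetic differences are that you work with real coefficients throughout instead of first reducing to integer vectors, and that you express the positivity through the pairing $\dim_k\mathrm{Hom}_{K^b(\Lambda\textrm{-}\mathrm{proj})}(T_i,T_j)\geq 0$ on indecomposable summands rather than through $\dim_k\mathrm{End}_{K^b(\Lambda\textrm{-}\mathrm{proj})}(T')>0$ for the multiplicity complex $T'$.
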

\begin{proof}
    (a) Since all entries of $C_\Lambda$ are integers, it suffices to show that $v^\top C_\Lambda v>0$ for any nonzero vector $v\in\mathbb{Z}^t$. The basis $[P_1],\ldots,[P_t]$ gives a natural identification of $K_0(\Lambda\textrm{-proj})$ with $\mathbb{Z}^t$. By $\tau$-tilting finiteness of $\Lambda$ and Theorem \ref{asai}, $v$ is in some $g$-cone of a basic two-term silting complex $T=T_1\oplus\cdots\oplus T_t$ with each $T_i$ indecomposable. By \cite[Theorem 2.27]{AI}, the vectors $[T_1],\ldots,[T_t]$ form a $\mathbb{Z}$-basis of $K_0(\Lambda\textrm{-proj})$. Hence there exist nonnegative integers $a_1,\ldots,a_t$ such that $v=\sum_{i=1}^t a_i[T_i]$. Let $T':=T_1^{\oplus a_1}\oplus\cdots\oplus T_t^{\oplus a_t}$. Since $\Lambda$ is weakly symmetric, a complex $T'$ is tilting by \cite[Theorem A.4]{Ai}. Therefore we have
    $$v^\top C_\Lambda v=\sum_{i\in\mathbb{Z}}(-1)^i\textrm{dim}_k\textrm{Hom}_{K^b(\Lambda\textrm{-proj})}(T',T'[i])=\textrm{dim}_k\textrm{End}_{K^b(\Lambda\textrm{-proj})}(T')>0.$$
    (b) In the proof of (a), we showed that $v^\top C_\Lambda v>0$ if $v\in\mathbb{Z}^t\setminus \{0\}$ is in some $g$-cone. It is also true that $v^\top C_\Lambda v\geq 0$ if $v\in\mathbb{R}^t$ is in some $g$-cone, because all $g$-cones are spanned by vectors with integer entries. Since $\Lambda$ is $g$-tame, $v^\top C_\Lambda v\geq 0$ holds for all vectors $v$ in the dense subset in $\mathbb{R}^t$. Therefore, $v^\top C_\Lambda v\geq 0$ holds for all vectors $v\in\mathbb{R}^t$.
\end{proof}

\vspace{3mm}

\begin{rem}
    The converse of Proposition \ref{tinf} does not hold. A counterexample to the converse of (a) is any $\tau$-tilting infinite group algebra because Cartan matrices of group algebras are positive definite. A counterexample to the converse of (b) is the following quiver algebra
    $$
    \begin{tikzcd}
        1 \ar[loop left, "a"] \rar[bend left=10] \rar[bend left=20] \rar[bend left=30, "b_i"] & 2 \ar[loop right, "c"] \lar[bend left=10] \lar[bend left=20] \lar[bend left=30, "d_i"]
    \end{tikzcd}
    $$
    with relations
    $$a^2=d_ib_i,\, c^2=b_id_i,\, ad_i=b_ia=b_id_j=cb_i=d_ib_j=d_ic=0\; (1\leq i\neq j \leq 3).$$
\end{rem}

\vspace{3mm}

\indent Thanks to Proposition \ref{tinf}, we can prove $\tau$-tilting infiniteness of weakly symmetric algebras by calculating Cartan matrices. In case $\Lambda$ is selfinjective, we have a similar assertion to Proposition \ref{tinf}.\\

\begin{prop}\label{tinfsi}
    Assume that $\Lambda$ is selfinjective. Let $\nu\in\mathfrak{S}_t$ be the Nakayama permutation of $\Lambda$. If there exists a nonzero vector $v\in\mathbb{Z}^t$ such that $v^\top C_\Lambda v\leq 0$ and $v$ is invariant under the action of $\nu$, the permutation of entries, then $\Lambda$ is $\tau$-tilting infinite.
\end{prop}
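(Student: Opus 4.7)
My plan is to mimic the proof of Proposition \ref{tinf}(a), substituting a Serre-duality argument for the weakly symmetric input from \cite{Ai}. Arguing by contradiction, suppose $\Lambda$ is $\tau$-tilting finite. By Theorem \ref{asai} combined with \cite[Theorem 2.27]{AI}, the vector $v$ lies in some $g$-cone $C(T)$ of a basic two-term silting complex $T=T_1\oplus\cdots\oplus T_t$, and I can expand $v=\sum_i a_i[T_i]$ with $a_i\in\mathbb{Z}_{\geq 0}$. Setting $T':=\bigoplus_i T_i^{\oplus a_i}$, the same Euler-form calculation as in the proof of Proposition \ref{tinf}(a) (using that $T'$ is two-term and presilting) gives
\[
v^\top C_\Lambda v \;=\; \dim_k \mathrm{End}(T') \;-\; \dim_k \mathrm{Hom}_{K^b(\Lambda\textrm{-proj})}(T',T'[-1]).
\]
Since $v\neq 0$ forces $T'\neq 0$, it suffices to prove that $\mathrm{Hom}_{K^b}(T',T'[-1])=0$, for then $v^\top C_\Lambda v=\dim_k\mathrm{End}(T')>0$ contradicts the hypothesis.

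For this vanishing I invoke Serre duality in $K^b(\Lambda\textrm{-proj})$: since $\Lambda$ is selfinjective, the Nakayama functor $\nu_\Lambda=D\mathrm{Hom}_\Lambda(-,\Lambda)$ is a Serre functor on this triangulated category, so
\[
\dim_k\mathrm{Hom}_{K^b}(T',T'[-1]) \;=\; \dim_k\mathrm{Hom}_{K^b}(T',\nu_\Lambda T'[1]),
\]
and this right-hand side vanishes whenever $\nu_\Lambda T'\cong T'$ (it then equals $\dim_k\mathrm{Hom}(T',T'[1])=0$ by presilting). The functor $\nu_\Lambda$ permutes the basis $\{[P_i]\}$ of $K_0(\Lambda\textrm{-proj})$ by $\nu^{-1}$, and the hypothesis on $v$ says that $v$ lies in the fixed subspace $V_\nu$ of this action. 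Hence $v\in C(T)\cap C(\nu_\Lambda T)$. If $v$ lies in the relative interior of $C(T)$, disjointness of interiors of distinct top-dimensional cones of the silting fan (valid under $\tau$-tilting finiteness) forces $C(\nu_\Lambda T)=C(T)$, so $\nu_\Lambda T\cong T$ as basic silting; the induced permutation $\sigma$ of the summands $\{T_i\}$ then makes the multiplicity vector $(a_i)$ $\sigma$-invariant (by the $\nu$-invariance of $v$), giving $\nu_\Lambda T'\cong T'$ and the desired vanishing.

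The main obstacle is the boundary case, in which $v$ lies on a proper face of every $g$-cone containing it. I intend to handle this by perturbation inside $V_\nu$: replace $v$ with $v':=v+\varepsilon w$ for a suitable $w\in V_\nu$ and small $\varepsilon>0$, chosen so that $v'$ still satisfies $(v')^\top C_\Lambda v'\leq 0$ but sits in the interior of some $g$-cone. When $v^\top C_\Lambda v<0$, any generic small $w\in V_\nu$ works by continuity. When $v^\top C_\Lambda v=0$, the identity $u^\top C_\Lambda v=v^\top C_\Lambda u$ for $u,v\in V_\nu$ (a direct consequence of the selfinjective identity $C_{ij}=C_{\nu(j),i}$) shows that $C_\Lambda$ is symmetric on $V_\nu$; if the restriction $C_\Lambda|_{V_\nu}$ is indefinite one reduces to the strict case, and if it is positive semidefinite then $v\in\ker(C_\Lambda|_{V_\nu})$, so perturbing within this kernel keeps the form at zero while moving $v'$ into the interior of some $g$-cone. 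Rescaling $v'$ to an integer vector if necessary and applying the interior argument produces a tilting $T'$ with $0\geq (v')^\top C_\Lambda v'=\dim_k\mathrm{End}(T')>0$, the sought contradiction.
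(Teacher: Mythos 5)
Your overall skeleton matches the paper's: reduce to a two-term presilting complex $T'$ with $[T']=v$, show $\nu_\Lambda T'\cong T'$, deduce $\mathrm{Hom}_{K^b(\Lambda\textrm{-proj})}(T',T'[-1])=0$, and reach the contradiction $0\geq v^\top C_\Lambda v=\dim_k\mathrm{End}(T')>0$. Your Euler-form computation and the Serre-duality vanishing are fine (the latter is essentially what the paper extracts from \cite[Theorem A.4]{Ai}). The divergence is in how you establish $\nu_\Lambda T'\cong T'$, and there your argument has a genuine gap. The paper gets this in one line from \cite[Theorem 5.5]{AIR}: two-term \emph{presilting} complexes are determined by their classes in $K_0(\Lambda\textrm{-proj})$, so $[\nu_\Lambda T']=v=[T']$ already forces $\nu_\Lambda T'\cong T'$, with no interiority hypothesis on $v$ whatsoever. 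Your fan-geometric substitute works only when $v$ lies in the interior of a maximal $g$-cone.

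The perturbation treatment of the boundary case does not close the gap. First, you need $v'=v+\varepsilon w$ with $w$ in the $\nu$-fixed subspace $V_\nu\subseteq\mathbb{R}^t$ to land in the interior of a maximal $g$-cone, but nothing rules out that $V_\nu$ is locally contained in the walls of the fan near $v$: if $V_\nu$ lies inside the hyperplane spanned by a facet through $v$, no small perturbation inside $V_\nu$ escapes the boundary, and ``generic $w\in V_\nu$'' buys nothing. Second, and more decisively, in the positive-semidefinite subcase you perturb within $\ker(C_\Lambda|_{V_\nu})$; this kernel can perfectly well be the line $\mathbb{R}v$ itself, in which case the only available perturbations are rescalings of $v$, which lie in exactly the same cones as $v$ and change nothing. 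This is not a peripheral corner case: in the intended application (Proposition \ref{main0}) the vector $v$ has entries of both signs and there is no reason for it to be interior to any maximal cone. The repair is simply to invoke the injectivity of $g$-vectors on two-term presilting complexes and apply it directly to $T'$ and $\nu_\Lambda T'$; the rest of your argument then goes through unchanged.
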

\begin{proof}
    Assume that $\Lambda$ is $\tau$-tilting finite and there exists a nonzero vector $v\in\mathbb{Z}^t$ such that $v^\top C_\Lambda v\leq 0$ and $v$ is invariant under the action of $\nu$. As in the proof of Proposition \ref{tinf} (a), we can take a nonzero two-term silting complex $T'$ such that $v=[T']$. Since $v$ is invariant under the action of $\nu$ and two-term silting complexes are uniquely determined by their $g$-vectors (\cite[Theorem 5.5]{AIR}), $T'$ is invariant under the Nakayama functor. Hence $T'$ is tilting by \cite[Theorem A.4]{Ai}. Therefore, we can show that $v^\top C_\Lambda v>0$ in the same way as Proposition \ref{tinf} (a), which leads to a contradiction.
\end{proof}

\vspace{2mm}

\section{When $k[(\mathbb{Z}/m\mathbb{Z})^n\rtimes\mathfrak{S}_n]$ is $\tau$-tilting finite}
In this section, we will determine when the group algebras over generalized symmetric groups are $\tau$-tilting finite.\\
\indent First, we consider when the group algebra $k[(\mathbb{Z}/p^\ell\mathbb{Z})^n\rtimes H]$ is $\tau$-tilting finite, where $\ell$ is a nonnegative integer such that $p^\ell\geq n$, $H$ is a subgroup of the symmetric group $\mathfrak{S}_n$, and the action of $H$ to $(\mathbb{Z}/p^\ell\mathbb{Z})^n$ is the permutation of entries. The group algebra $k[(\mathbb{Z}/p^\ell\mathbb{Z})^n\rtimes H]$ is isomorphic to the skew group algebra
$$\Lambda:= k[x_1,\ldots,x_n]/(x_1^{p^\ell},\ldots,x_n^{p^\ell})\rtimes H$$
with the $k$-basis $\{x_1^{i_1}\cdots x_n^{i_n}\sigma\mid 0\leq i_1,\ldots,i_n<p^\ell,\sigma\in H\}$ and the multiplication defined by    $$(f(x_1,\ldots,x_n)\sigma)\cdot(g(x_1,\ldots,x_n)\tau)=f(x_1,\ldots,x_n)g(x_{\sigma^{-1}(1)},\ldots,x_{\sigma^{-1}(n)})\sigma\tau$$
for any $f,g\in k[x_1,\ldots,x_n]/(x_1^{p^\ell},\ldots,x_n^{p^\ell})$ and $\sigma,\tau\in H$. For convenience, we write ${^\sigma f} (x_1,\ldots,x_n):=f(x_{\sigma^{-1}(1)},\ldots,x_{\sigma^{-1}(n)})$ for any polynomial $f$ and $\sigma\in H$. The skew group algebra $\Lambda$ can be given a grading by $\mathrm{deg}(x_i)=1$ for $1\leq i\leq n$ and $\mathrm{deg}(\sigma)=0$ for $\sigma\in H$.\\

\begin{lem}\label{Jac}
    The Jacobson radical of $\Lambda$ contains the positive degree part of $\Lambda$.
\end{lem}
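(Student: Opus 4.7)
The plan is to identify the positive degree part $I$ of $\Lambda$ as a two-sided nilpotent ideal; since any nilpotent two-sided ideal in a finite-dimensional algebra is contained in the Jacobson radical, this gives the conclusion.

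First I would set $R:=k[x_1,\ldots,x_n]/(x_1^{p^\ell},\ldots,x_n^{p^\ell})$ and let $\mathfrak{m}:=(x_1,\ldots,x_n)\subset R$ be its unique graded maximal ideal, so that on the level of $k$-vector spaces the positive degree part satisfies $I=\mathfrak{m}\otimes_k kH$. Next I would verify that $I$ is a two-sided ideal of $\Lambda$: since $H$ acts on $R$ by permuting the generators $x_1,\ldots,x_n$, the ideal $\mathfrak{m}$ is stable under the $H$-action, i.e., ${^\sigma \mathfrak{m}}=\mathfrak{m}$ for every $\sigma\in H$. Using the multiplication rule $(f\sigma)(g\tau)=f\cdot{^\sigma g}\cdot\sigma\tau$, one sees immediately that $I\cdot\Lambda\subseteq I$ and $\Lambda\cdot I\subseteq I$.

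Then I would show that $I$ is nilpotent. The same multiplication rule, combined with the $H$-stability of $\mathfrak{m}$, yields by induction $I^k\subseteq \mathfrak{m}^k\otimes_k kH$ for every $k\geq 1$: indeed, a product of $k$ elements of the form $f_i\sigma_i$ with $f_i\in\mathfrak{m}$ equals $f_1\cdot{^{\sigma_1}f_2}\cdots{^{\sigma_1\cdots\sigma_{k-1}}f_k}\cdot\sigma_1\cdots\sigma_k$, whose polynomial part lies in $\mathfrak{m}^k$ because each $^{\sigma}f_i$ still belongs to $\mathfrak{m}$. Since $\mathfrak{m}^{n(p^\ell-1)+1}=0$ in $R$ (any monomial of that total degree forces some exponent to reach $p^\ell$), we obtain $I^{n(p^\ell-1)+1}=0$.

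Finally, I would invoke the standard fact that any nilpotent two-sided ideal of a finite-dimensional $k$-algebra is contained in its Jacobson radical, concluding $I\subseteq J(\Lambda)$. I do not anticipate a serious obstacle: the only substantive point is the bookkeeping of how the $H$-action intertwines with $\mathfrak{m}$ in the skew product, and this is clean precisely because $H$ permutes the variables and hence preserves $\mathfrak{m}$.
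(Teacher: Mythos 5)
Your proof is correct and follows exactly the paper's route: the paper's own proof is the one-line observation that the positive degree part is a nilpotent ideal, and your argument simply fills in the details (two-sidedness via $H$-stability of $\mathfrak{m}$, nilpotency via the degree bound $n(p^\ell-1)+1$, and the standard fact that nilpotent ideals lie in the Jacobson radical). No gaps.
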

\begin{proof}
    The assertion holds since the positive degree part of $\Lambda$ is a nilpotent ideal.
\end{proof}

\vspace{3mm}

\indent We denote by $E_i$ the $i$-th elementary symmetric polynomial, by $I$ the ideal of $k[x_1,\dots,x_n]$ generated by $E_1,\ldots,E_n$, and by $\mathcal{C}:=k[x_1,\ldots,x_n]/I$ the coinvariant algebra. The skew group algebra $\mathcal{C}\rtimes H$ is the key algebra in examining the support $\tau$-tilting modules over $k[(\mathbb{Z}/p^\ell\mathbb{Z})^n\rtimes H]$.\\

\begin{prop}\label{stauc}
    If $p^l\geq n$, then there exists an isomorphism as partially ordered sets
    $$\mathrm{s}\tau\textrm{-}\mathrm{tilt}\,k[(\mathbb{Z}/p^\ell\mathbb{Z})^n\rtimes H]\cong\mathrm{s}\tau\textrm{-}\mathrm{tilt}\,\mathcal{C}\rtimes H.$$
\end{prop}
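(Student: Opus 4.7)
The plan is to realize $\mathcal{C}\rtimes H$ as a quotient of $\Lambda$ by a two-sided ideal generated by central elements lying in the Jacobson radical, and then invoke a reduction theorem for support $\tau$-tilting modules under such quotients.

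First I would verify that when $p^\ell \geq n$, each $x_i^{p^\ell}$ already lies in the ideal $I=(E_1,\ldots,E_n)$ defining $\mathcal{C}$. Since each $x_i$ is a root of $\prod_{j=1}^{n}(t-x_j)=t^n-E_1 t^{n-1}+\cdots+(-1)^nE_n$, we have $x_i^n\in I$, and therefore $x_i^{p^\ell}=x_i^{p^\ell-n}\cdot x_i^{n}\in I$. Consequently there is a surjective $k$-algebra homomorphism $\pi\colon\Lambda\twoheadrightarrow\mathcal{C}\rtimes H$ whose kernel $K$ is generated, as a two-sided ideal, by the images of $E_1,\ldots,E_n$ in $\Lambda$.

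Next I would check that $K$ is a central ideal contained in $\mathrm{rad}(\Lambda)$. The $E_j$ are symmetric polynomials, hence fixed by conjugation by any $\sigma\in H\subseteq\mathfrak{S}_n$, so they commute with every element of $H$ inside $\Lambda$; they manifestly commute with the polynomial subalgebra, so $E_1,\ldots,E_n\in Z(\Lambda)$. Each $E_j$ has positive degree, and the positive part of $\Lambda$ is nilpotent, so Lemma \ref{Jac} places them in $\mathrm{rad}(\Lambda)$. Hence $K\subseteq Z(\Lambda)\cap\mathrm{rad}(\Lambda)$.

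Finally I would appeal to the reduction theorem stating that for a two-sided ideal $K$ of a finite-dimensional algebra generated by central elements inside the Jacobson radical, the projection $\Lambda\twoheadrightarrow\Lambda/K$ induces a poset isomorphism on support $\tau$-tilting modules. Applied to $\pi$, this yields the claimed isomorphism $\mathrm{s}\tau\textrm{-}\mathrm{tilt}\,\Lambda\cong\mathrm{s}\tau\textrm{-}\mathrm{tilt}\,\mathcal{C}\rtimes H$. The main obstacle is precisely this last step: without the centrality hypothesis, reduction modulo an ideal in the Jacobson radical only gives a surjection of support $\tau$-tilting posets, so the delicate point is to cite (or independently establish) a reduction lemma in the spirit of Eisele--Janssens--Raedschelders whose hypotheses exactly match the centrality and nilpotency of the generators of $K$ verified above, and to confirm that the induced bijection on basic support $\tau$-tilting modules is order-preserving in both directions.
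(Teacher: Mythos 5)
Your proposal is correct and follows essentially the same route as the paper: verify that $x_i^{p^\ell}\in I$ via the identity $\prod_j(t-x_j)=\sum_j(-1)^{n-j}E_{n-j}t^j$, observe that $E_1,\ldots,E_n$ are central elements of $\Lambda$ lying in the Jacobson radical, and reduce modulo $I\Lambda$. The reduction theorem you are looking for is exactly \cite[Theorem 11]{EJR}, which the paper invokes at this step, so the ``delicate point'' you flag is already covered by the literature.
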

\begin{proof}
    By Lemma \ref{Jac}, $E_1,\ldots,E_n$ are in both the center of $\Lambda$ and the Jacobson radical of $\Lambda$. Thus, we have an isomorphism as partially ordered sets
    $$\mathrm{s}\tau\textrm{-}\mathrm{tilt}\,k[(\mathbb{Z}/p^\ell\mathbb{Z})^n\rtimes H]\cong\mathrm{s}\tau\textrm{-}\mathrm{tilt}\,\Lambda\cong\mathrm{s}\tau\textrm{-}\mathrm{tilt}\,\Lambda/I\Lambda$$
    by \cite[Theorem 11]{EJR}. For any integer $1\leq j\leq n$, $x_j^n$ belongs to $I$ because
    \begin{equation}\label{eqxn}
        0=\prod_{i=1}^n(x_j-x_i)=x_j^n+\sum_{i=1}^n(-1)^iE_ix_j^{n-i}.
    \end{equation}
    By the assumption $p^\ell\geq n$, $x_j^{p^\ell}$ also belongs to $I$. Therefore,
    $$\Lambda/I\Lambda\cong k[x_1,\ldots,x_n]/(x_1^{p^\ell},\ldots,x_n^{p^\ell},E_1,\ldots,E_n)\rtimes H\cong \mathcal{C}\rtimes H.$$
\end{proof}

\vspace{3mm}

\indent The algebra $\Lambda/I\Lambda\cong\mathcal{C}\rtimes H$ is also graded because $I\Lambda$ is the ideal generated by homogeneous elements of $\Lambda$. As well as $\Lambda$, the Jacobson radical of $\mathcal{C}\rtimes H$ contains the positive degree part of $\mathcal{C}\rtimes H$ and the quotient algebra of $\mathcal{C}\rtimes H$ by its positive degree part is isomorphic to the group algebra $kH$. Hence, the primitive orthogonal idempotents of $\mathcal{C}\rtimes H$ are exactly those of the quotient algebra $kH$. We denote the complete set of simple modules over $kH$ by $\{S_\lambda\mid \lambda\in\mathrm{IBr}\,H\}$ and the projective cover of $S_\lambda$ by $P_\lambda$. Then the complete set of simple $\mathcal{C}\rtimes H$-modules is also the set $\{S_\lambda\mid\lambda\in\mathrm{IBr}\,H\}$, where we consider $S_\lambda$ as a $\mathcal{C}\rtimes H$-module under the natural embedding $kH\mathrm{\textrm{-}mod}\rightarrow\mathcal{C}\rtimes H\mathrm{\textrm{-}mod}$, and we denote the projective cover (resp. injective hull) of $S_\lambda$ over $\mathcal{C}\rtimes H$ by $\widetilde{P}_\lambda$ (resp. $\widetilde{I}_\lambda$). In particular, we denote the trivial (resp. sign) $kH$-module by $S_\mathrm{triv}$ (resp. $S_\mathrm{sgn}$). Note that tensoring simple $kH$-modules with $S_\mathrm{sgn}$ induces an involution on $\mathrm{IBr}\,H$, denoted by $(-)^*$, that is, it follows that $S_\lambda\otimes S_\mathrm{sgn}\cong S_{\lambda^*}$. It also follows that $P_\lambda\otimes S_\mathrm{sgn}\cong P_{\lambda^*}$.\\

\subsection{Selfinjectivity of $\mathcal{C}\rtimes H$}
In this subsection, we show that $\mathcal{C}\rtimes H$ is selfinjective. It is well-known (for example, see section I\hspace{-1.2pt}I-H in \cite{Ar}) that the coinvariant algebra $\mathcal{C}$ has a $k$-basis 
$$\{x_1^{i_1}\cdots x_n^{i_n}\in\mathcal{C}\mid 0\leq i_j\leq n-j\;(1\leq\forall j\leq n)\}.$$
Hence, we have a $k$-basis of $\mathcal{C}\rtimes H$
$$\mathcal{B}:=\{x_1^{i_1}\cdots x_n^{i_n}\sigma\in\mathcal{C}\rtimes H\mid 0\leq i_j\leq n-j\;(1\leq\forall j\leq n),\;\sigma\in H\}.$$
It is known that the maximum degree part of $\mathcal{C}$ is one dimensional and spanned by the monomial $\Delta:=x_1^{n-1}x_2^{n-2}\cdots x_{n-1}$, and hence isomorphic to $S_\mathrm{sgn}$ as a $kH$-module (for example, see (I.7) and (I.8) in \cite{GP}). Thus, it follows that $\sigma\Delta=\mathrm{sgn}(\sigma)\Delta\sigma$ in $\mathcal{C}\rtimes H$ for any $\sigma\in H$.\\

\begin{dfn}
    We define a $k$-linear map $\varphi:\mathcal{C}\rtimes H\rightarrow k$ by $\varphi(\alpha):=\delta_{\alpha,\Delta}\;(\alpha\in\mathcal{B})$ and a bilinear form $\langle-,-\rangle:\mathcal{C}\rtimes H\otimes\mathcal{C}\rtimes H\rightarrow k$ by $\langle\alpha,\beta\rangle:=\varphi(\alpha\beta)\;(\alpha,\beta\in\mathcal{C}\rtimes H)$.\\
\end{dfn}

\begin{prop}\label{selfinj}
    The bilinear form $\langle-,-\rangle:\mathcal{C}\rtimes H\otimes\mathcal{C}\rtimes H\rightarrow k$ is associative and nondegenerate. In particular, $\mathcal{C}\rtimes H$ is selfinjective.
\end{prop}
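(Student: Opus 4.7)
The plan is to reduce nondegeneracy of $\langle-,-\rangle$ on $\mathcal{C}\rtimes H$ to the classical Poincar\'e duality on the coinvariant algebra $\mathcal{C}$. Associativity itself is immediate: for any $\alpha,\beta,\gamma\in\mathcal{C}\rtimes H$,
$$\langle\alpha\beta,\gamma\rangle=\varphi((\alpha\beta)\gamma)=\varphi(\alpha(\beta\gamma))=\langle\alpha,\beta\gamma\rangle,$$
since $\varphi$ is $k$-linear and multiplication in $\mathcal{C}\rtimes H$ is associative. Thus once one-sided nondegeneracy is established, the two-sided version follows, and the existence of an associative nondegenerate bilinear form gives that $\mathcal{C}\rtimes H$ is Frobenius, hence selfinjective.

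The first ingredient is the classical fact that the coinvariant algebra $\mathcal{C}=k[x_1,\ldots,x_n]/I$ is itself a Frobenius algebra with one-dimensional socle spanned by $\Delta$: the bilinear form $\mathcal{C}\otimes\mathcal{C}\to k$ sending $(f,g)$ to the coefficient of $\Delta$ in $fg$ (with respect to the monomial basis of $\mathcal{C}$ recalled above) is nondegenerate. This is standard, and follows from the references to \cite{Ar} and \cite{GP} already in use. Concretely, for any nonzero $f\in\mathcal{C}$ there exists $g\in\mathcal{C}$ such that $fg$ has nonzero $\Delta$-coefficient.

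The second ingredient is the observation that, in terms of the basis $\mathcal{B}$, the functional $\varphi$ picks out precisely the coefficient of the basis element $\Delta\cdot 1_H$. Writing a general element as $\alpha=\sum_{\sigma\in H}f_\sigma\sigma$ with $f_\sigma\in\mathcal{C}$, suppose $\alpha\neq 0$ and choose $\sigma_0\in H$ with $f_{\sigma_0}\neq 0$. By the first ingredient, pick $h\in\mathcal{C}$ so that $f_{\sigma_0}h$ has nonzero $\Delta$-coefficient, and set
$$\beta:=\bigl({}^{\sigma_0^{-1}}h\bigr)\sigma_0^{-1}.$$
Using the multiplication rule in $\Lambda$,
$$\alpha\beta=\sum_{\sigma\in H}f_\sigma\cdot{}^{\sigma}\bigl({}^{\sigma_0^{-1}}h\bigr)\cdot\sigma\sigma_0^{-1}.$$
The group-part $\sigma\sigma_0^{-1}$ equals $1_H$ if and only if $\sigma=\sigma_0$, in which case ${}^{\sigma_0}({}^{\sigma_0^{-1}}h)=h$. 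Hence the identity group-component of $\alpha\beta$ is exactly $f_{\sigma_0}h$, and all other summands lie in the kernel of $\varphi$. Therefore $\varphi(\alpha\beta)$ equals the $\Delta$-coefficient of $f_{\sigma_0}h$, which is nonzero by construction, proving right-nondegeneracy.

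I do not expect a serious obstacle: once the Frobenius structure on $\mathcal{C}$ is invoked, everything is a bookkeeping exercise in the skew group algebra. The only point requiring a little care is the interaction of $H$ with $\Delta$ (the sign twist $\sigma\Delta=\mathrm{sgn}(\sigma)\Delta\sigma$): this might seem to force a sign character into the construction, but it never enters because $\varphi$ only detects the identity group-component, so the twist is invisible at the level of the pairing. This is also the reason nondegeneracy works uniformly in $H$, without any restriction on characteristic or on $H$ itself.
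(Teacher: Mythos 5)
Your proof is correct, and its overall shape matches the paper's: both arguments come down to the facts that the top degree of $\mathcal{C}$ is $k\Delta$ and that $\varphi$ only detects the identity component of $H$, so that nondegeneracy on $\mathcal{C}\rtimes H$ reduces to nondegeneracy of the $\Delta$-coefficient pairing on $\mathcal{C}$. The real difference is where that last fact comes from. You cite Poincar\'e duality for the coinvariant algebra as classical; the paper proves it from scratch, and that computation is the bulk of its proof: it shows by induction that $(x_1^{n-1}x_2^{n-2}\cdots x_i^{n-i})x_i\in I$, deduces that every monomial lexicographically greater than $\Delta$ vanishes in $\mathcal{C}$, and then pairs a nonzero $\alpha$ with $\sigma_0^{-1}g$ where $g$ is the complementary monomial to the lex-smallest monomial occurring in $\alpha$. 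Your citation is legitimate, but neither \cite{Ar} nor \cite{GP} states the duality in the form you need (they give the monomial basis and the one-dimensionality of the top degree); the cleanest justification is that $E_1,\ldots,E_n$ form a regular sequence, so $\mathcal{C}$ is an Artinian graded complete intersection, hence Gorenstein with socle $k\Delta$, and every nonzero ideal of an Artinian Gorenstein algebra meets the socle --- which is exactly your ``first ingredient'' (for inhomogeneous $f$ one applies this to the lowest-degree homogeneous component, as the higher components are killed after multiplying into top degree). You should either supply this one-line argument or a precise reference. Two smaller points in your favour: your bookkeeping in the skew group algebra is cleaner than the paper's, since right-multiplying by $({}^{\sigma_0^{-1}}h)\sigma_0^{-1}$ isolates the identity component without ever commuting a group element past $\Delta$, so the sign twist genuinely never appears (in the paper's left-multiplication version it does, harmlessly, as the factor $\mathrm{sgn}(\sigma_0^{-1})$); and your remark that one-sided nondegeneracy suffices is correct by the usual dimension count on $\mathcal{C}\rtimes H\to D(\mathcal{C}\rtimes H)$.
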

\begin{proof}
    It suffices to show that $\langle-,-\rangle$ is nondegenerate because associativity is obvious by the definition.\\
    \indent First, we show that $(x_1^{n-1}x_2^{n-2}\cdots x_i^{n-i})x_i\in I$ for all $1\leq i\leq n$ by induction on $i$. The assertion holds for $i=1$ by \eqref{eqxn}. Assume $i>1$ and let
    \begin{equation}\label{eqab1}
        \prod_{j=1}^{i-1}(t-x_j)=\sum_{j=0}^{i-1}a_jt^j,\;\prod_{j=i}^n(t-x_j)=\sum_{j=0}^{n-i+1}b_jt^j.
    \end{equation}
    Then,
    \begin{equation}\label{eqab2}
        \sum_{j=0}^{i-1}a_jt^j\sum_{j=0}^{n-i+1}b_jt^j=\prod_{j=1}^{n}(t-x_j)=\sum_{j=0}^{n}(-1)^{n-j}E_{n-j}t^j.
    \end{equation}
    By comparing the coefficients in the left and right hand side of \eqref{eqab2}, it follows inductively that for all $0\leq j\leq n-i$, $b_j$ is generated by $a_0,\ldots,a_{i-2}\in k[x_1,\ldots,x_{i-1}]$ in $\mathcal{C}$ since $a_{i-1}=b_{n-i+1}=1$. In particular, for $1\leq j\leq n-i$, since $\mathrm{deg}(b_j)>0$, $b_j$ can be written as a sum of monomials of positive degree with variables $x_1,\ldots,x_{i-1}$ in $\mathcal{C}$. Substituting $t=x_i$ in \eqref{eqab1}, we get
    $$0=x_i^{n-i+1}+\sum_{j=0}^{n-i}b_jx_i^j,$$
    and hence we have
    \begin{equation}\label{eqlem}
        (x_1^{n-1}x_2^{n-2}\cdots x_i^{n-i})x_i=-(x_1^{n-1}x_2^{n-2}\cdots x_{i-1}^{n-i+1})\sum_{j=0}^{n-i}b_jx_i^j.
    \end{equation}
    By induction hypothesis, the right hand side of \eqref{eqlem} vanishes in $\mathcal{C}$. Therefore, it follows that $(x_1^{n-1}x_2^{n-2}\cdots x_i^{n-i})x_i\in I$, establishing the induction step.\\
    \indent We define the lexicographic order on monomials as follows:
    $$x_1^{i_1}\cdots x_n^{i_n}< x_1^{i'_1}\cdots x_n^{i'_n}\Leftrightarrow i_s< i'_s\;(s:=\min\{1\leq j\leq n\mid i_j\neq i'_j\}).$$
    It follows that any monomial greater than $\Delta$ with respect to the lexicographic order is equal to zero in $\mathcal{C}$ by the above discussion. Take arbitrary $0\neq\alpha\in\mathcal{C}\rtimes H$. Among monomials appearing when we write $\alpha$ as a linear combination of elements in $\mathcal{B}$, we denote by $f$ the smallest monomial with respect to the lexicographic order. Then there exists a monomial $g$ such that $gf=\Delta$. For any monomial $h>f$, we have $gh>gf=\Delta$ and so $gh$ vanishes in $\mathcal{C}$. Thus, we can write
    $$g\alpha=\sum_{\sigma\in H}c_\sigma \Delta\sigma\;(c_\sigma\in k)$$
    and take $\sigma_0\in H$ such that $c_{\sigma_0}\neq0$. Therefore,
    $$\langle\sigma_0^{-1}g,\alpha\rangle=\varphi\left(\sigma_0^{-1}\sum_{\sigma\in H}c_\sigma\Delta\sigma\right)=\varphi\left(\sum_{\sigma\in H}c_{\sigma}\mathrm{sgn}(\sigma_0^{-1})\Delta\sigma_0^{-1}\sigma\right)=c_{\sigma_0}\mathrm{sgn}(\sigma_0^{-1})\neq0,$$
    which implies that $\langle-,-\rangle$ is nondegenerate.
\end{proof}

\vspace{3mm}

\begin{prop}\label{Nakayama}
    For $\lambda\in\mathrm{IBr}\,H$, we have $\widetilde{P}_\lambda\cong\widetilde{I}_{\lambda^*}$.
\end{prop}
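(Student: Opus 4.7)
The strategy is to compute $\mathrm{soc}(\widetilde{P}_\lambda)$ and show it is isomorphic to $S_{\lambda^*}$. Since $\mathcal{C}\rtimes H$ is selfinjective by Proposition \ref{selfinj}, $\widetilde{P}_\lambda$ is automatically an indecomposable injective module, and must therefore coincide with the injective hull of its own socle; by the identification of that socle, this will be $\widetilde{I}_{\lambda^*}$.

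To set up the computation, I would realize $\widetilde{P}_\lambda$ as $(\mathcal{C}\rtimes H)e_\lambda$, where $e_\lambda\in kH$ is a primitive idempotent with $kHe_\lambda\cong P_\lambda$; this $e_\lambda$ remains primitive in $\mathcal{C}\rtimes H$ because the positive-degree part lies in the Jacobson radical, so the quotient $\mathcal{C}\rtimes H/\mathrm{rad}(\mathcal{C}\rtimes H)$ coincides with $kH/\mathrm{rad}(kH)$. The natural map $f\otimes x\mapsto fx$ is a $k$-linear isomorphism $\mathcal{C}\otimes_k P_\lambda\xrightarrow{\sim}(\mathcal{C}\rtimes H)e_\lambda$, and under this identification the left $\mathcal{C}$-action multiplies on the first factor while the left $H$-action is the diagonal one $\tau\cdot(f\otimes x)={}^\tau f\otimes\tau x$.

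The positive-degree part $\mathcal{C}^{>0}$ is a nilpotent ideal of $\mathcal{C}$ whose socle is the top-degree line $k\Delta$, so any element of $\widetilde{P}_\lambda$ annihilated by $\mathcal{C}^{>0}$ must lie in $k\Delta\otimes P_\lambda$. Using ${}^\sigma\Delta=\mathrm{sgn}(\sigma)\Delta$ in $\mathcal{C}$, the diagonal $H$-action on this subspace becomes $\tau\cdot(\Delta\otimes x)=\mathrm{sgn}(\tau)\Delta\otimes\tau x$, yielding an isomorphism of $kH$-modules
\[
k\Delta\otimes P_\lambda\;\cong\;S_{\mathrm{sgn}}\otimes_k P_\lambda\;\cong\;P_{\lambda^*},
\]
the second step being the isomorphism already noted in the paper. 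Since $kH$ is symmetric, $\mathrm{soc}_{kH}(P_{\lambda^*})\cong S_{\lambda^*}$, and its elements are killed by $\mathcal{C}^{>0}$ by construction, so $S_{\lambda^*}\subseteq\mathrm{soc}(\widetilde{P}_\lambda)$. The socle of an indecomposable projective over a selfinjective algebra is simple, so this containment is an equality, giving $\widetilde{P}_\lambda\cong\widetilde{I}_{\lambda^*}$.

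The main subtlety is the two-stage filtering: first passing to $k\Delta\otimes P_\lambda$ using the $\mathcal{C}$-grading, then extracting the $\mathrm{rad}(kH)$-socle inside. This is legitimate because $\mathrm{rad}(\mathcal{C}\rtimes H)$ is generated by $\mathcal{C}^{>0}$ together with $\mathrm{rad}(kH)$, an immediate consequence of the grading and Lemma \ref{Jac}. The calculation then pins down the Nakayama permutation of $\mathcal{C}\rtimes H$ on simples as $\lambda\mapsto\lambda^*$.
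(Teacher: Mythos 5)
Your proposal is correct and follows essentially the same route as the paper: both realize $\widetilde{P}_\lambda$ as $(\mathcal{C}\rtimes H)e_\lambda$, use that the top-degree line $k\Delta$ of $\mathcal{C}$ carries the sign representation and is annihilated by $\mathcal{C}^{>0}$, and identify a copy of $S_{\mathrm{sgn}}\otimes S_\lambda\cong S_{\lambda^*}$ as the socle of $\widetilde{P}_\lambda$. The only cosmetic difference is that the paper passes directly to $\Delta\cdot\mathrm{soc}(P_\lambda)$ rather than first forming $k\Delta\otimes P_\lambda\cong P_{\lambda^*}$ and then taking the $kH$-socle.
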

\begin{proof}
    We can take a primitive idempotent $e_\lambda$ of $kH$ such that $(\mathcal{C}\rtimes H)e_\lambda=\widetilde{P}_\lambda$. Since $S_\lambda$ is the socle of $P_\lambda\subset kH$, we can regard $S_\lambda$ as a subspace of $kH$. The subspace $\{\Delta x\in\mathcal{C}\rtimes H\mid x\in S_\lambda\}$ of $\widetilde{P}_\lambda=(\mathcal{C}\rtimes H)e_\lambda=\{fx\in\mathcal{C}\rtimes H\mid f\in\mathcal{C},x\in P_\lambda\}$ is a $\mathcal{C}\rtimes H$-module and isomorphic to a simple $\mathcal{C}\rtimes H$-module $S_\mathrm{sgn}\otimes S_\lambda$ since $\alpha\Delta=0$ if $\mathrm{deg}(\alpha)>0$ and $\sigma\Delta=\mathrm{sgn}(\sigma)\Delta\sigma$ for $\sigma\in H$. Therefore, we have $\mathrm{soc}(\widetilde{P}_\lambda)\cong S_{\lambda^*}$, which implies $\widetilde{P}_\lambda\cong\widetilde{I}_{\lambda^*}$.
\end{proof}

\vspace{3mm}

\begin{cor}
    If $p=2$, then $\mathcal{C}\rtimes H$ is weakly symmetric.
\end{cor}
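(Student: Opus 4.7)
The plan is to reduce the claim to Proposition \ref{Nakayama} and observe a collapse caused by the characteristic. Recall that weakly symmetric means that the Nakayama permutation is the identity, equivalently $\widetilde{P}_\lambda\cong\widetilde{I}_\lambda$ for every $\lambda\in\mathrm{IBr}\,H$. Proposition \ref{Nakayama} already gives $\widetilde{P}_\lambda\cong\widetilde{I}_{\lambda^*}$, so it suffices to show that the involution $(-)^*$ on $\mathrm{IBr}\,H$ is trivial when $p=2$.

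To show this, I would note that $(-)^*$ was defined by tensoring with the sign module $S_{\mathrm{sgn}}$, which is the one-dimensional $kH$-module on which $\sigma\in H$ acts as $\mathrm{sgn}(\sigma)\in\{\pm1\}\subset k$. When $p=\mathrm{char}(k)=2$, we have $-1=1$ in $k$, so $\sigma$ acts as $1$ for every $\sigma\in H$. Hence $S_{\mathrm{sgn}}\cong S_{\mathrm{triv}}$ as $kH$-modules, and consequently $S_\lambda\otimes S_{\mathrm{sgn}}\cong S_\lambda$ for every $\lambda\in\mathrm{IBr}\,H$, i.e., $\lambda^*=\lambda$.

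Combining this with Proposition \ref{Nakayama}, we conclude $\widetilde{P}_\lambda\cong\widetilde{I}_{\lambda^*}=\widetilde{I}_\lambda$ for every $\lambda\in\mathrm{IBr}\,H$. Since $\mathcal{C}\rtimes H$ is already known to be selfinjective by Proposition \ref{selfinj}, this equality of indecomposable projectives and injectives means the Nakayama permutation is the identity, so $\mathcal{C}\rtimes H$ is weakly symmetric. There is no real obstacle here; the whole content of the corollary is the elementary observation that the sign character becomes trivial modulo $2$, so the proof is essentially a one-line application of Proposition \ref{Nakayama}.
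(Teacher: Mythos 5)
Your proof is correct and takes essentially the same route as the paper, which simply notes that $S_\mathrm{sgn}\cong S_\mathrm{triv}$ when $p=2$ and concludes via Proposition \ref{Nakayama}; you have merely spelled out the intermediate step that the involution $(-)^*$ becomes the identity. No issues.
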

\begin{proof}
    The assertion follows since $S_\mathrm{sgn}\cong S_\mathrm{triv}$ when $p=2$.
\end{proof}

\vspace{3mm}

\subsection{When $\mathcal{C}\rtimes H$ is $\tau$-tilting infinite}
In this subsection, we first compute the Cartan matrix of $\mathcal{C}\rtimes H$ and then establish the sufficient condition for $\tau$-tilting infiniteness of $\mathcal{C}\rtimes H$.\\

\begin{prop}\label{Cartan}
    For any $\lambda,\mu\in\mathrm{IBr}\,H$, the $(\lambda,\mu)$-entry of the Cartan matrix $C_{\mathcal{C}\rtimes H}$ of $\mathcal{C}\rtimes H$ is equal to $(\mathfrak{S}_n:H)\cdot\mathrm{dim}_k\widetilde{P}_\lambda\cdot\mathrm{dim}_k\widetilde{P}_\mu$. In particular, the Cartan matrix $C_{\mathcal{C}\rtimes H}$ has rank one.
\end{prop}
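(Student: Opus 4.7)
The plan is to compute $\dim_k\mathrm{Hom}_{\mathcal{C}\rtimes H}(\widetilde{P}_\lambda,\widetilde{P}_\mu)=\dim_k e_\lambda(\mathcal{C}\rtimes H)e_\mu$ by restricting scalars to $kH$ and pinning down $\widetilde{P}_\mu$ as a free $kH$-module via a Brauer-character argument.

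As a $k$-vector space, $\widetilde{P}_\mu=(\mathcal{C}\rtimes H)e_\mu$ equals $\mathcal{C}\otimes P_\mu$, and the left $kH$-action inherited from $\mathcal{C}\rtimes H$ is the diagonal one $h\cdot(f\otimes x)={}^h f\otimes hx$; this is immediate from the skew multiplication and the fact that $e_\mu\in kH$. Two standard facts about this diagonal construction then apply: first, the tensor of an arbitrary $kH$-module with a projective one is projective under the diagonal action (the change-of-variables $h\otimes v\mapsto h\otimes h^{-1}v$ turns the diagonal action on $kH\otimes V$ into the action on the first factor only, which is free), so $\mathcal{C}\otimes P_\mu$ is projective over $kH$; second, a projective $kH$-module is determined up to isomorphism by its Brauer character, since the Cartan matrix of $kH$ is invertible.

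The key remaining input is the character of the $\mathfrak{S}_n$-action on $\mathcal{C}$. The graded $\mathfrak{S}_n$-equivariant Hilbert series of $\mathcal{C}$ is
\[\frac{\prod_{i=1}^n(1-q^i)}{\det(1-q\sigma^{-1}|V)}\]
where $V=kx_1\oplus\cdots\oplus kx_n$ is the natural $\mathfrak{S}_n$-representation, and evaluating at $q=1$ shows that the trace of $\sigma$ on $\mathcal{C}$ vanishes for $\sigma\neq 1$ and equals $n!$ for $\sigma=1$: if $\sigma$ has $r<n$ cycles, the denominator has only $r$ zeros at $q=1$ versus $n$ in the numerator, so the limit vanishes. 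This calculation works over $\mathbb{Z}$ and so survives reduction modulo $p$. By multiplicativity of Brauer characters, $\mathcal{C}\otimes P_\mu$ then has the same Brauer character as $(kH)^{(\mathfrak{S}_n:H)\cdot\dim_k P_\mu}$, and hence the two projective $kH$-modules are isomorphic.

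Taking $e_\lambda$-components and using $\dim_k e_\lambda\cdot kH=\dim_k P_\lambda$ (from $kH$ being symmetric) yields the product expression claimed in the proposition, and the rank-one assertion is then immediate since the entries factor as an outer product of two vectors indexed by $\mathrm{IBr}\,H$. The main subtle point is the character calculation: the stronger Chevalley isomorphism $\mathcal{C}\cong k\mathfrak{S}_n$ can fail at the module level when $p\mid n!$, so one has to work at the level of traces, which is characteristic-free.
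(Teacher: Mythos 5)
Your argument is correct and follows the same overall strategy as the paper's proof: restrict $\widetilde{P}_\mu=\mathcal{C}\otimes P_\mu$ to $kH$, identify it with the free module $kH^{\oplus (\mathfrak{S}_n:H)\cdot\dim_k P_\mu}$, and then compute the Hom-space by adjunction (your idempotent computation of $\dim_k e_\lambda(\mathcal{C}\rtimes H)e_\mu$ is the same step in different notation). The one genuine difference is the source of the key input $[\mathcal{C}]=[k\mathfrak{S}_n]$ in $K_0(k\mathfrak{S}_n\textrm{-mod})$: the paper simply cites \cite[Theorem 1.4]{M}, whereas you rederive it from the $q\to 1$ specialization of the graded character $\prod_{i=1}^n(1-q^i)/\det(1-q\sigma\,|\,V)$, using that the Artin monomial basis gives a $\mathbb{Z}$-form so that the ordinary character computation descends to Brauer characters. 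This makes the proof self-contained, and you are also more explicit than the paper about the step that upgrades an equality in the Grothendieck group to an isomorphism of modules (both sides are projective over $kH$, and projectives are determined by their Brauer characters because the Cartan matrix of $kH$ is nonsingular); the paper passes over this silently when it writes $\mathcal{C}\otimes P_\mu\cong kH^{\oplus(\mathfrak{S}_n:H)}\otimes P_\mu$. One small caveat: what you actually obtain --- and what the paper's own proof obtains --- is $(\mathfrak{S}_n:H)\cdot\dim_k P_\lambda\cdot\dim_k P_\mu$ with the projective covers taken over $kH$, not over $\mathcal{C}\rtimes H$ as in the literal statement; the two differ by the factor $(n!)^2$, so the statement appears to contain a typo, but this is harmless for the rank-one conclusion and for the way the result is used in Proposition \ref{main0}.
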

\begin{proof}
    By \cite[Theorem 1.4]{M}, it follows that $[\mathcal{C}]=[k\mathfrak{S}_n]$ in the Grothendieck group $K_0(k\mathfrak{S}_n\mathrm{\textrm{-}mod})$ of $k\mathfrak{S}_n\mathrm{\textrm{-}mod}$, and hence we have $[\mathcal{C}]=(\mathfrak{S}_n:H)[kH]$ in the Grothendieck group $K_0(kH\mathrm{\textrm{-}mod})$ of $kH\mathrm{\textrm{-}mod}$. Thus, we have the following isomorphisms as $kH$-modules:
    $$\widetilde{P}_\mu\cong\mathcal{C}\otimes P_\mu\cong kH^{\oplus (\mathfrak{S}_n:H)}\otimes P_\mu\cong kH^{\oplus (\mathfrak{S}_n:H)\cdot\mathrm{dim}_kP_\mu}.$$
    Therefore, it follows that
    $$
    \begin{array}{ccl}
        \mathrm{dim}_k\mathrm{Hom}_{\mathcal{C}\rtimes H}(\widetilde{P}_\lambda,\widetilde{P}_\mu) & = & \mathrm{dim}_k\mathrm{Hom}_{\mathcal{C}\rtimes H}(\mathcal{C}\rtimes H\otimes_{kH}P_\lambda,\widetilde{P}_\mu), \\
         & = & \mathrm{dim}_k\mathrm{Hom}_{kH}(P_\lambda,\widetilde{P}_\mu), \\
         & = & \mathrm{dim}_k\mathrm{Hom}_{kH}(P_\lambda,kH)\cdot (\mathfrak{S}_n:H)\cdot\mathrm{dim}_kP_\mu, \\
         & = & (\mathfrak{S}_n:H)\cdot\mathrm{dim}_kP_\lambda\cdot \mathrm{dim}_kP_\mu.
    \end{array}
    $$
\end{proof}

\vspace{3mm}

\begin{prop}\label{main0}
    If $\#\mathrm{IBr}\,H\geq\min\{p,3\}$, then $\mathcal{C}\rtimes H$ is $\tau$-tilting infinite.
\end{prop}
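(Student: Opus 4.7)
The plan is to apply Proposition \ref{tinfsi} to deduce $\tau$-tilting infiniteness. By Proposition \ref{Cartan}, the Cartan matrix $C_{\mathcal{C}\rtimes H}$ has rank one and factors as $(\mathfrak{S}_n:H)\,d\,d^\top$ for a strictly positive integer vector $d=(d_\lambda)_{\lambda\in\mathrm{IBr}\,H}$. Consequently, for any $v\in\mathbb{Z}^t$,
$$v^\top C_{\mathcal{C}\rtimes H}\,v=(\mathfrak{S}_n:H)(d^\top v)^2\geq 0,$$
with equality if and only if $d^\top v=0$. It therefore suffices to produce a nonzero $v\in\mathbb{Z}^t$ satisfying both $d^\top v=0$ and $v_\lambda=v_{\lambda^*}$ for all $\lambda$, where $\lambda\mapsto\lambda^*$ is the Nakayama permutation furnished by Proposition \ref{Nakayama}.

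First I observe that $d_\lambda=d_{\lambda^*}$, which is immediate from the $kH$-module isomorphism $P_\lambda\otimes S_\mathrm{sgn}\cong P_{\lambda^*}$ already recorded in the text. Let $\Omega$ denote the set of orbits of the involution on $\mathrm{IBr}\,H$, and for each $O\in\Omega$ set $D_O:=\sum_{\lambda\in O}d_\lambda>0$. Once two distinct orbits $O_1,O_2\in\Omega$ are chosen, the vector $v\in\mathbb{Z}^t$ which equals $D_{O_2}$ on $O_1$, equals $-D_{O_1}$ on $O_2$, and vanishes elsewhere is nonzero, constant on each orbit (so $\nu$-invariant), and satisfies $d^\top v=D_{O_2}D_{O_1}-D_{O_1}D_{O_2}=0$. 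Applying Proposition \ref{tinfsi} then finishes the proof.

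The only remaining task is to verify $\#\Omega\geq 2$ under the hypothesis $\#\mathrm{IBr}\,H\geq\min\{p,3\}$. If $p=2$, then $S_\mathrm{sgn}\cong S_\mathrm{triv}$, the involution is trivial, and $\#\Omega=\#\mathrm{IBr}\,H\geq 2$. If $p\geq 3$, then $\#\mathrm{IBr}\,H\geq 3$, and since every orbit has size at most two we obtain $\#\Omega\geq\lceil\#\mathrm{IBr}\,H/2\rceil\geq 2$, regardless of the number of fixed points. I do not anticipate a genuine obstacle beyond this bookkeeping: the substantive content is concentrated in Propositions \ref{tinfsi}, \ref{Nakayama} and \ref{Cartan}, and the threshold $\min\{p,3\}$ is precisely the condition that forces the $\nu$-invariant subspace of $\mathbb{R}^t$ to have dimension strictly greater than one, thereby containing a nonzero integer vector orthogonal to $d$.
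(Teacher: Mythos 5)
Your proposal is correct and follows essentially the same route as the paper: it exploits the rank-one factorization of the Cartan matrix from Proposition \ref{Cartan}, the identity $\dim_k P_\lambda=\dim_k P_{\lambda^*}$, and the Nakayama permutation $\lambda\mapsto\lambda^*$ from Proposition \ref{Nakayama} to build a nonzero $\nu$-invariant integer vector $v$ with $v^\top C_{\mathcal{C}\rtimes H}v=0$, then invokes Proposition \ref{tinfsi}. The paper's explicit vector $\dim_kP_\mu\,(v_\lambda+v_{\lambda^*})-\dim_kP_\lambda\,(v_\mu+v_{\mu^*})$ is exactly your orbit construction for the two $*$-orbits of $\lambda$ and $\mu$, and your case analysis on $p$ just makes explicit the paper's claim that the hypothesis yields two distinct orbits.
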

\begin{proof}
    By assumption, there exist two simple $kH$-modules $S_\lambda\ncong S_\mu$ such that $S_\mathrm{sgn}\otimes S_\lambda\ncong S_\mu$. For $\chi\in\mathrm{IBr}\,H$, we denote by $v_\chi\in\mathbb{Z}^{\#\mathrm{IBr}\,H}$ the vector with entry 1 at $\chi$-coordinate and entry zero elsewhere. Let
    $$v:=\mathrm{dim}_kP_\mu\cdot(v_\lambda+v_{\lambda^*})-\mathrm{dim}_kP_\lambda\cdot(v_\mu+v_{\mu^*})\in\mathbb{Z}^{\#\mathrm{IBr}\,H}.$$
    The vector $v$ is nonzero and invariant under the action of Nakayama permutation $(-)^*$ (see Proposition \ref{Nakayama}). Moreover, we have $C_{\mathcal{C}\rtimes H}\cdot v=0$ because each row of $C_{\mathcal{C}\rtimes H}$ is a multiple of a vector $(\mathrm{dim}_kP_\chi)_{\chi\in\mathrm{IBr}\,H}$ by Proposition \ref{Cartan} and
    $$\mathrm{dim}_kP_\mu\cdot(\mathrm{dim}_kP_\lambda+\mathrm{dim}_kP_{\lambda*})-\mathrm{dim}_kP_\lambda\cdot(\mathrm{dim}_kP_\mu+\mathrm{dim}_kP_{\mu*})=0.$$
    Therefore, $\mathcal{C}\rtimes H$ is $\tau$-tilting infinite by Propositions \ref{tinfsi} and \ref{selfinj}.
\end{proof}

\vspace{3mm}

\subsection{Main results}
We are now ready to prove our main results. In this subsection, we give a criterion for $\tau$-finiteness of the group algebra of the group $(\mathbb{Z}/m\mathbb{Z})^n\rtimes H$ in terms of its $p$-hyperfocal subgroup. We write $m=p^lm'$ where $l,m'$ are integers and $p$ does not divide $m'$. Then we have the algebra surjection $k[(\mathbb{Z}/m\mathbb{Z})^n\rtimes H]\twoheadrightarrow k[(\mathbb{Z}/p^l\mathbb{Z})^n\rtimes H]$ induced by the natural group surjection $(\mathbb{Z}/m\mathbb{Z})^n\rtimes H\twoheadrightarrow(\mathbb{Z}/p^l\mathbb{Z})^n\rtimes H$. Thus, by Propositions \ref{stauc} and \ref{main0}, we obtain the following corollary:\\

\begin{cor}\label{main1}
    If $p^l\geq n$ and $\#\mathrm{IBr}\,H\geq\min\{p,3\}$, then the group algebra $k[(\mathbb{Z}/m\mathbb{Z})^n\rtimes H]$ is $\tau$-tilting infinite.\\
\end{cor}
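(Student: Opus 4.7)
The plan is to chain together the three ingredients already assembled in the preceding subsections, so no new technical input is required. First, the hypothesis $\#\mathrm{IBr}\,H\geq\min\{p,3\}$ lets me invoke Proposition \ref{main0}, yielding that the skew group algebra $\mathcal{C}\rtimes H$ is $\tau$-tilting infinite.

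Next, since $p^l\geq n$, Proposition \ref{stauc} gives an isomorphism of partially ordered sets
$$\mathrm{s}\tau\textrm{-}\mathrm{tilt}\,k[(\mathbb{Z}/p^l\mathbb{Z})^n\rtimes H]\cong\mathrm{s}\tau\textrm{-}\mathrm{tilt}\,\mathcal{C}\rtimes H,$$
so the cardinalities agree and $k[(\mathbb{Z}/p^l\mathbb{Z})^n\rtimes H]$ is $\tau$-tilting infinite as well.

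Finally, I would transfer $\tau$-tilting infiniteness along the algebra surjection
$$k[(\mathbb{Z}/m\mathbb{Z})^n\rtimes H]\twoheadrightarrow k[(\mathbb{Z}/p^l\mathbb{Z})^n\rtimes H]$$
induced by the natural group quotient $(\mathbb{Z}/m\mathbb{Z})^n\twoheadrightarrow(\mathbb{Z}/p^l\mathbb{Z})^n$ on the abelian normal factor (which is equivariant for the permutation action of $H$, so it descends to a ring map of the crossed products). Here I invoke the standard preservation principle that a quotient of a $\tau$-tilting finite algebra is $\tau$-tilting finite, equivalently any algebra surjecting onto a $\tau$-tilting infinite algebra is itself $\tau$-tilting infinite; this follows because support $\tau$-tilting modules lift along surjective algebra maps (a result of Demonet--Iyama--Jasso). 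Applying the contrapositive finishes the proof.

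There is no real obstacle: all of the substantive work (the explicit basis for $\mathcal{C}$, the selfinjectivity and Nakayama permutation of $\mathcal{C}\rtimes H$, the rank-one Cartan computation, and the construction of the $\nu$-invariant isotropic vector to apply Proposition \ref{tinfsi}) has been carried out in the preceding propositions. The only care needed at this stage is to cite the correct lifting-along-surjections lemma for the final step.
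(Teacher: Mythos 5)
Your proposal is correct and follows essentially the same chain as the paper: Proposition \ref{main0} for $\mathcal{C}\rtimes H$, the poset isomorphism of Proposition \ref{stauc} to reach $k[(\mathbb{Z}/p^l\mathbb{Z})^n\rtimes H]$, and then the surjection $k[(\mathbb{Z}/m\mathbb{Z})^n\rtimes H]\twoheadrightarrow k[(\mathbb{Z}/p^l\mathbb{Z})^n\rtimes H]$ together with the fact that quotients of $\tau$-tilting finite algebras are $\tau$-tilting finite. If anything, you are slightly more explicit than the paper in naming the lifting-along-surjections principle that justifies the final step.
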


\indent To state the main result, we compute the $p$-hyperfocal subgroup of $(\mathbb{Z}/m\mathbb{Z})^n\rtimes H$ when $H$ is a $p'$-group.\\

\begin{prop}\label{rk}
    Assume that $H$ is a $p'$-subgroup of $\mathfrak{S}_n$. Denote by $R$ the $p$-hyperfocal subgroup of $(\mathbb{Z}/m\mathbb{Z})^n\rtimes H$. Then the sum of ranks of two groups $R$ and $C_{(\mathbb{Z}/m\mathbb{Z})^n}(H)$ is equal to $n$.
\end{prop}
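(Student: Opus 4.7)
The plan is to identify the $p$-hyperfocal subgroup explicitly as $R=[H,Q]$, and then count ranks using the coprime action of $H$ on the Sylow $p$-subgroup. Since $H$ is a $p'$-subgroup, a Sylow $p$-subgroup of $G:=(\mathbb{Z}/m\mathbb{Z})^n\rtimes H$ coincides with that of $P:=(\mathbb{Z}/m\mathbb{Z})^n$, namely $Q:=m'P\cong(\mathbb{Z}/p^l\mathbb{Z})^n$. The canonical $p$-primary decomposition $\mathbb{Z}/m\mathbb{Z}\cong\mathbb{Z}/p^l\mathbb{Z}\times\mathbb{Z}/m'\mathbb{Z}$ gives $P=Q\oplus P_{p'}$ with $P_{p'}:=p^lP\cong(\mathbb{Z}/m'\mathbb{Z})^n$, and both summands are $H$-stable because $H$ permutes coordinates.

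Next, I would pin down $O^p(G)$. Being generated by all $p'$-elements, $O^p(G)$ contains $H$ and $P_{p'}$, and since it is normal in $G$ and contains $H$ it also contains the commutator subgroup $[H,P]$. Conversely, I would verify that $N:=H\cdot(P_{p'}+[H,P])$ is normal in $G$ with $G/N$ a $p$-group---the quotient identifies with $P/(P_{p'}+[H,P])$, a $p$-quotient of $P$ on which the induced $H$-action is trivial---so $O^p(G)=N$. Then $R=O^p(G)\cap Q=(P_{p'}+[H,P])\cap Q$. Using $P=Q\oplus P_{p'}$ as $H$-modules, $[H,P]=[H,Q]+[H,P_{p'}]\subseteq[H,Q]+P_{p'}$, so $P_{p'}+[H,P]=P_{p'}+[H,Q]$. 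Since $Q\cap P_{p'}=0$, intersecting with $Q$ collapses the sum to $[H,Q]$, yielding $R=[H,Q]$.

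Finally, the rank count follows from the coprime action: the idempotent $e:=|H|^{-1}\sum_{h\in H}h$ is well defined in $(\mathbb{Z}/p^l\mathbb{Z})[H]$ since $\gcd(|H|,p)=1$, so $Q=eQ\oplus(1-e)Q=Q^H\oplus[H,Q]$ as $\mathbb{Z}/p^l\mathbb{Z}$-modules. Because $H$ acts by permuting coordinates, $Q^H\cong(\mathbb{Z}/p^l\mathbb{Z})^o$ where $o$ is the number of $H$-orbits on $\{1,\ldots,n\}$; uniqueness of the elementary-divisor decomposition of $(\mathbb{Z}/p^l\mathbb{Z})^n$ then forces the complement to satisfy $[H,Q]\cong(\mathbb{Z}/p^l\mathbb{Z})^{n-o}$, so $\mathrm{rank}\,R=n-o$. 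Meanwhile $C_P(H)=P^H\cong(\mathbb{Z}/m\mathbb{Z})^o$ has rank $o$, and summing yields $n$. The main obstacle is the middle step, namely pinning down $O^p(G)$ cleanly and correctly performing the intersection with $Q$; once $R=[H,Q]$ is established, the Maschke-type splitting of $Q$ makes the rank equality immediate.
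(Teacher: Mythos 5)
Your proposal is correct and follows essentially the same route as the paper: both reduce to the Sylow $p$-subgroup $Q\cong(\mathbb{Z}/p^l\mathbb{Z})^n$ and use the coprime-action decomposition $Q=[H,Q]\times C_Q(H)$ together with the identification of the $p$-hyperfocal subgroup with $[H,Q]$, the only difference being that the paper obtains these two facts by citing Proposition 3.5 and Lemma 3.9 of [HK] (applied to $(\mathbb{Z}/p^l\mathbb{Z})^n\rtimes((\mathbb{Z}/m'\mathbb{Z})^n\rtimes H)$), whereas you prove them from scratch via the explicit description of $O^p(G)$ and the averaging idempotent. Your orbit-counting refinement ($\mathrm{rank}\,C_P(H)=o$, $\mathrm{rank}\,R=n-o$) is a harmless sharpening of the paper's observation that $C_Q(H)$ is homocyclic of exponent $p^l$.
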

\begin{proof}
    By applying \cite[Proposition 3.5 and Lemma 3.9]{HK} to $$(\mathbb{Z}/m\mathbb{Z})^n\rtimes H=(\mathbb{Z}/p^l\mathbb{Z})^n\rtimes ((\mathbb{Z}/m'\mathbb{Z})^n\rtimes H),$$
    we have
    $$(\mathbb{Z}/p^l\mathbb{Z})^n=R\times C_{(\mathbb{Z}/p^l\mathbb{Z})^n}((\mathbb{Z}/m'\mathbb{Z})^n\rtimes H)=R\times C_{(\mathbb{Z}/p^l\mathbb{Z})^n}(H).$$
    Since $C_{(\mathbb{Z}/p^l\mathbb{Z})^n}(H)\cong(\mathbb{Z}/p^l\mathbb{Z})^{n'}$ for some $n'$ and two groups $C_{(\mathbb{Z}/p^l\mathbb{Z})^n}(H),\,C_{(\mathbb{Z}/m\mathbb{Z})^n}(H)$ has the same rank, the assertion follows. 
\end{proof}

\vspace{3mm}

\indent By Corollary \ref{main1} and Proposition \ref{rk}, we can obtain
another example satisfying the conjecture in \cite{HK}, which says that $\tau$-tilting finiteness of group algebras is controlled by their $p$-hyperfocal subgroups.\\

\begin{thm}\label{main2}
    Assume that $p^l\geq n$ and $H$ is a $p'$-subgroup of $\mathfrak{S}_n$. Denote by $R$ the $p$-hyperfocal subgroup of $(\mathbb{Z}/m\mathbb{Z})^n\rtimes H$. Then $k[(\mathbb{Z}/m\mathbb{Z})^n\rtimes H]$ is $\tau$-tilting finite if and only if $R$ has rank $\leq1$.
\end{thm}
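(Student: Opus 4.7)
My plan is to prove both directions, with the ``only if'' direction requiring a case split. The ``if'' direction is immediate: if $R$ has rank $\leq 1$, then $R$ is a cyclic subgroup of the abelian $p$-group $(\mathbb{Z}/p^l\mathbb{Z})^n$, so Proposition \ref{tfhyp}(a) gives $\tau$-tilting finiteness of $k[(\mathbb{Z}/m\mathbb{Z})^n \rtimes H]$.

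For the ``only if'' direction, I assume rank $R \geq 2$. The first step is to apply Corollary \ref{main1}, which covers every case with $\#\mathrm{IBr}\,H \geq \min\{p,3\}$. Because $H$ is a $p'$-group, $\#\mathrm{IBr}\,H$ equals the number of conjugacy classes of $H$, and an elementary argument (any group with at most two conjugacy classes has order at most two) shows that the inequality fails only when $|H| \leq 2$. Since $H$ being trivial gives rank $R = 0$, the only subcase not covered by Corollary \ref{main1} is $p \geq 3$ and $H = \langle\sigma\rangle$ for an involution $\sigma = (i_1\,j_1)\cdots(i_k\,j_k) \in \mathfrak{S}_n$; by Proposition \ref{rk}, rank $R = k$ in this situation, so $k \geq 2$.

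To handle this remaining subcase, I reduce to a group covered by the main result of \cite{HK}. Let $N \leq (\mathbb{Z}/m\mathbb{Z})^n$ be the $\sigma$-invariant subgroup generated by the $e_r$ for $r$ fixed by $\sigma$ together with the $e_{i_s}+e_{j_s}$ for $1 \leq s \leq k$. The quotient $(\mathbb{Z}/m\mathbb{Z})^n/N$ is free of rank $k$ over $\mathbb{Z}/m\mathbb{Z}$, on which $\sigma$ acts as multiplication by $-1$. A further quotient by the normal $p'$-subgroup $(\mathbb{Z}/m'\mathbb{Z})^k$ yields $G' := (\mathbb{Z}/p^l\mathbb{Z})^k \rtimes \langle\sigma\rangle$, which is of the form $P \rtimes H'$ with $P$ abelian $p$-group and $H'$ abelian $p'$-group. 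Since $p \geq 3$, the commutators $[v,\sigma] = 2v$ generate all of $P$, forcing $O^p(G') = G'$; hence the $p$-hyperfocal subgroup of $G'$ is $P$ itself, of rank $k \geq 2$ and thus non-cyclic. By the main theorem of \cite{HK}, $kG'$ is $\tau$-tilting infinite. Composing the two quotient maps gives an algebra surjection $k[(\mathbb{Z}/m\mathbb{Z})^n \rtimes H] \twoheadrightarrow kG'$, and since $\tau$-tilting infiniteness lifts along algebra surjections (any brick of a quotient algebra is a brick of the ambient algebra), $k[(\mathbb{Z}/m\mathbb{Z})^n \rtimes H]$ is $\tau$-tilting infinite as required.

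The main obstacle is recognizing that Corollary \ref{main1} alone does not cover every case of rank $R \geq 2$—the overlooked subcase being $p \geq 3$ with $|H|=2$—and engineering the reduction to a group already settled by \cite{HK}. Once the two-step quotient is set up, the commutator computation and the appeal to \cite{HK} are routine.
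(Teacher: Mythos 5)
Your proof is correct, and it is in fact more complete than the paper's own argument. The ``if'' direction and the first step of the ``only if'' direction (using Corollary \ref{main1} to force $\#\mathrm{IBr}\,H\leq 2$, hence $H$ trivial or of order two) coincide with the paper. The divergence comes next: the paper simply asserts that once $H$ is trivial or isomorphic to $\mathbb{Z}/2\mathbb{Z}$, the centralizer $C_{(\mathbb{Z}/m\mathbb{Z})^n}(H)$ has rank $\geq n-1$, and concludes via Proposition \ref{rk}. That assertion only holds when the generating involution is a single transposition; for $\sigma=(i_1\,j_1)\cdots(i_k\,j_k)$ with $k\geq 2$ the fixed subgroup has rank $n-k$, so Proposition \ref{rk} gives $\mathrm{rank}\,R=k$ and the paper's chain of implications does not close in this subcase. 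You correctly isolate it as the one situation ($p\geq 3$, $|H|=2$, $k\geq 2$) not reached by Corollary \ref{main1}, and your reduction --- quotienting by the $\sigma$-invariant subgroup $N$ to reach $(\mathbb{Z}/p^l\mathbb{Z})^k\rtimes\mathbb{Z}/2\mathbb{Z}$ with the $-1$ action, noting that the commutators $-2v$ generate all of $(\mathbb{Z}/p^l\mathbb{Z})^k$ since $2$ is invertible modulo $p$, so the hyperfocal subgroup is the whole non-cyclic $p$-group, invoking the main theorem of \cite{HK} for semidirect products of abelian $p$- and $p'$-groups, and lifting $\tau$-tilting infiniteness along the algebra surjection via the brick criterion of \cite{DIJ} --- checks out at every step and is exactly what is needed to settle it. What the paper's shorter route buys is brevity; what yours buys is an actual proof of the remaining case, so your version should be regarded as the correct argument.
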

\begin{proof}
    The ``if'' part follows from Proposition \ref{tfhyp}. To prove the ``only if'' part, we assume that $k[(\mathbb{Z}/m\mathbb{Z})^n\rtimes H]$ is $\tau$-tilting finite. By Corollary \ref{main1}, we have $\#\mathrm{IBr}\,H\leq2$, which implies that $H$ has at most two conjugacy classes since $H$ is a $p'$-group. Hence, $H$ is trivial or isomorphic to $\mathbb{Z}/2\mathbb{Z}$. Thus, $C_{(\mathbb{Z}/m\mathbb{Z})^n}(H)$ has rank $\geq n-1$. Therefore, by Proposition \ref{rk}, $R$ has rank $\leq 1$.
\end{proof}

\vspace{3mm}

\end{document}